\documentclass[11pt]{amsart}

\usepackage{epigamath}


\usepackage[english]{babel}


\numberwithin{equation}{section}


\usepackage[shortlabels]{enumitem}

\usepackage{amsmath,amssymb,amsthm}
\usepackage{thmtools}
\usepackage{mathtools}

\usepackage[all]{xy}

\usepackage{tikz-cd}
\usetikzlibrary{babel}

\usepackage{url}


\theoremstyle{plain}

\newtheorem{thm}{Theorem}[section]
\newtheorem{prop}[thm]{Proposition}
\newtheorem{lem}[thm]{Lemma}
\newtheorem{cor}[thm]{Corollary}
\newtheorem{conj}[thm]{Conjecture}
\newtheorem*{claim}{Claim}

\theoremstyle{definition}
\newtheorem{dfn}[thm]{Definition}

\theoremstyle{remark}
\newtheorem{rem}[thm]{Remark}

\newtheorem{notation}[thm]{Notation}


\newcommand{\N}{\mathbb{N}}
\newcommand{\Q}{\mathbb{Q}}
\newcommand{\R}{\mathbb{R}}
\newcommand{\C}{\mathbb{C}}

\DeclareMathOperator{\codim}{codim}
\DeclareMathOperator{\mult}{mult}
\DeclareMathOperator{\Exc}{Exc}

\DeclareMathOperator{\Supp}{Supp}

\DeclareMathOperator{\nklt}{nklt}
\DeclareMathOperator{\sB}{\mathbf{B}}

\setlist[enumerate,1]{label={\rm(\roman*)}, ref={\rm\roman*}} 

\newlist{a-enumerate}{enumerate}{2}
\setlist[a-enumerate,1]{label={\rm(\alph*)}, ref={\rm\alph*}}

\newlength{\savedparindent}%
\setlength{\savedparindent}{\parindent}%


\EpigaVolumeYear{9}{2025} \EpigaArticleNr{24} \ReceivedOn{July 8, 2024}
\InFinalFormOn{May 23, 2025}
\AcceptedOn{June 25, 2025}
	
\title{Nakayama--Zariski decomposition and\\ the termination of flips}
\titlemark{Nakayama--Zariski decomposition and the termination of flips}

\author{Vladimir Lazi\'c}
\address{Fachrichtung Mathematik, Campus, Geb\"aude E2.4, Universit\"at des Saarlandes, 66123 Saarbr\"ucken, Germany}
\email{lazic@math.uni-sb.de}
		
\author{Zhixin Xie}
\address{Institut \'Elie Cartan de Lorraine, Universit\'e de Lorraine, 54506 Nancy, France}
\email{zhixin.xie@univ-lorraine.fr}

\authormark{V.\ Lazi\'c and Z.\ Xie}

\AbstractInEnglish{We show that for pseudoeffective projective pairs the termination of one sequence of flips implies the termination of all flips, assuming a natural conjecture on the behaviour of the Nakayama--Zariski decomposition under the operations of a Minimal Model Program.}

\MSCclass{14E30}

\KeyWords{Termination of flips, Nakayama--Zariski decomposition}


\acknowledgement{Lazi\'c gratefully acknowledges support by the Deutsche Forschungsgemeinschaft (DFG, German Research Foundation) -- Project-ID 286237555 -- TRR 195.}

\begin{document}



\maketitle

\begin{prelims}

\DisplayAbstractInEnglish

\bigskip

\DisplayKeyWords

\medskip

\DisplayMSCclass

\end{prelims}


\newpage

\setcounter{tocdepth}{1}

\tableofcontents


\section{Introduction}

In this paper we study the relationship between the existence of minimal models and the termination of flips conjecture in the Minimal Model Program (MMP). We show that -- for pseudoeffective projective pairs over $\C$ -- the termination of one sequence of flips implies the termination of them all, assuming a natural conjecture on the behaviour of the Nakayama--Zariski decomposition under the operations of an MMP.

\subsection*{Previous work}
Termination of flips and the existence of minimal models are two of the most important open problems in higher-dimensional birational geometry. They are both known in dimension $3$ by \cite{Sho85,Kaw92,Sho96}, whereas in higher dimensions the situation is more complicated. In dimension $4$ the existence of minimal models is known by \cite{LT22}, building on \cite{KMM87,Sho09,Bir12b}. The termination of flips is known for canonical $4$-folds by \cite{KMM87,Fuj05} and for \emph{pseudoeffective} log canonical pairs, \textit{i.e.}~for projective log canonical pairs $(X,\Delta)$ such that $K_X+\Delta$ is pseudoeffective, by \cite{CT23,Mor25}. In dimension $5$ the existence of minimal models is known for pseudoeffective log canonical pairs $(X,\Delta)$ whose underlying variety is uniruled, or which satisfy $\kappa(X,K_X+\Delta)\geq0$; see \cite{LT22}.

Apart from these unconditional results, there are several conditional results in higher dimensions, which we discuss next in some detail, as this is relevant for the results in the present paper. It is convenient at this point to consider pseudoeffective and non-pseudoeffective pairs separately. Whereas it is known that for non-pseudoeffective log canonical pairs there exists at least one MMP which terminates by \cite{BCHM,HH20}, there does not currently exist any strategy to attack the termination of flips in general for this class, apart from the difficult strategy of \cite{Sho04}.

The situation is somewhat different for pseudoeffective pairs $(X,\Delta)$. It was realised in \cite{Bir12b} that the existence of minimal models is related to the existence of a decomposition of the divisor $K_X+\Delta$ into -- roughly -- a sum of a nef and an effective divisor, called \emph{weak Zariski decomposition}; see for instance \cite[Definition 2.10]{LT22}. Building on this approach, we now understand how the existence of minimal models and, to some extent, the termination of flips are related to \emph{generalised pairs}: very roughly, generalised pairs are couples $(X,\Delta+M)$, where $(X,\Delta)$ is a usual pair and $M$ is a divisor which behaves similarly to a nef divisor; see Definition~\ref{dfn:g-pairs}. There are now several inductive results connecting the existence of weak Zariski decompositions to the existence of minimal models of generalised pairs, see \cite{LT22,LT22a,HL22,TX24}, or to the termination of flips of generalised pairs, see \cite{CT23,Mor25}. Generalised pairs are indispensable in all recent progress on the existence of minimal models and the termination of flips, even for usual pairs.

In particular, one of the main results of \cite{CT23} states that for pseudoeffective generalised pairs the existence of weak Zariski decompositions implies the termination of flips, \emph{assuming the termination of flips in lower dimensions}. As mentioned above, the problem is that we currently do not have an inductive statement which works for non-pseudoeffective pairs; a similar issue in the context of the existence of minimal models motivated \cite{LT22}, as explained in the introduction to \textit{op.\ cit.} Overcoming this issue is the main motivation for the present paper.

There is, to our knowledge, only one general approach to the termination of flips conjecture in full generality: that of \cite{Sho04}, based on two difficult conjectures on the behaviour of discrepancies. The methods and the logic of the current paper are completely different.

\subsection*{Balanced MMP}
Nakayama introduced in \cite{Nak04} an important generalisation in higher dimensions of the classical Zariski decomposition on surfaces; this is a fundamental tool in a large portion of the recent progress in the MMP. To any pseudoeffective $\R$-divisor $D$ on a $\Q$-factorial projective variety, one can associate the \emph{Nakayama--Zariski decomposition} $P_\sigma(D)+N_\sigma(D)$, which has several excellent properties; we recall its precise definition and the properties we need in Section~\ref{sec:prelim}.

If a pseudoeffective generalised pair $(X,B+M)$ has a minimal model, then the Nakayama--Zariski decomposition of $K_X+B+M$ is automatically its weak Zariski decomposition -- this is in a sense \emph{the canonical} choice for a weak Zariski decomposition; see Lemma~\ref{lem:resNQC}. However, whereas weak Zariski decompositions have been exploited in the context of the existence of minimal models and the termination of flips before, the good properties of the Nakayama--Zariski decomposition have not.

Now, let $(X_1,B_1+M_1)$ be a pseudoeffective generalised pair which has a minimal model, and consider a $(K_{X_1}+{B_1}+{M_1})$-MMP 
\begin{center}
\begin{tikzcd}[column sep = 0.8em, row sep = 1.75em]
(X_1,B_1+M_1) \arrow[rr, dashed, "\pi_1"] && (X_2,B_2+M_2) \arrow[rr, dashed, "\pi_2"] && (X_3,B_3+M_3) \arrow[rr, dashed, "\pi_3"] && \cdots ,
\end{tikzcd}
\end{center} 
and for each $i$ set $P_i:=P_\sigma(K_{X_i}+B_i+M_i)$ and $N_i:=N_\sigma(K_{X_i}+B_i+M_i)$. We say that the sequence is \emph{balanced} if for each $i$ the log canonical threshold of $P_i+N_i$ with respect to $(X_i,B_i+M_i)$ is zero; see Remark~\ref{rem:easycomputation} and Lemma~\ref{lem:lct}. 

Our first result is that (assuming the existence of minimal models) the termination of flips conjecture for pseudoeffective generalised pairs follows from the termination of flips conjecture for \emph{balanced} pseudoeffective generalised pairs. This gives proper context for our main result, Theorem~\ref{mainthm} below. For the acronym NQC, see Section~\ref{sec:prelim}. 

\begin{prop}\label{pro:balanced}
Let $ (X_1, B_1+M_1) $ be a projective NQC log canonical generalised pair of dimension $n$ such that $K_{X_1}+B_1+M_1$ is pseudoeffective, and assume that $ (X_1, B_1+M_1) $ has a minimal model. Consider a $(K_{X_1}+{B_1}+{M_1})$-MMP
\begin{center}
\begin{tikzcd}[column sep = 0.8em, row sep = 1.75em]
(X_1,B_1+M_1) \arrow[rr, dashed, "\pi_1"] && (X_2,B_2+M_2) \arrow[rr, dashed, "\pi_2"] && (X_3,B_3+M_3) \arrow[rr, dashed, "\pi_3"] && \cdots .
\end{tikzcd}
\end{center}
Then there exist a projective $\Q$-factorial NQC dlt generalised pair $ (X_1', B_1'+M_1') $ of dimension $n$ which has a minimal model and a balanced $(K_{X_1'}+B_1'+M_1')$-MMP
\begin{center}
\begin{tikzcd}[column sep = 0.8em, row sep = 1.75em]
\left(X_1',B_1'+M_1'\right) \arrow[rr, dashed, "\pi_1'"] && \left(X_2',B_2'+M_2'\right) \arrow[rr, dashed, "\pi_2'"] && \left(X_3',B_3'+M_3'\right) \arrow[rr, dashed, "\pi_3'"] && \cdots 
\end{tikzcd}
\end{center}
such that the $(K_{X_1}+{B_1}+{M_1})$-MMP terminates if and only if the $(K_{X_1'}+B_1'+M_1')$-MMP terminates.
\end{prop}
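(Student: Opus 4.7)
The plan is to construct $(X_1', B_1' + M_1')$ as a $\Q$-factorial NQC dlt modification of $(X_1, B_1 + M_1)$ whose boundary is sufficiently rich in lc places, and then to lift the given MMP to a compatible $(K_{X_1'}+B_1'+M_1')$-MMP whose termination tracks that of the original and in which the balanced condition persists at every step.

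First, I would take a $\Q$-factorial NQC dlt modification $f \colon (X_1', B_1' + M_1') \to (X_1, B_1 + M_1)$, which exists by standard constructions since $(X_1, B_1 + M_1)$ is log canonical. By the crepant equality $K_{X_1'} + B_1' + M_1' = f^*(K_{X_1} + B_1 + M_1)$, the divisor on the left is pseudoeffective and $(X_1', B_1' + M_1')$ inherits a minimal model from $(X_1, B_1 + M_1)$. Within the flexibility of this construction, I would choose $f$ to extract a sufficiently rich collection of lc places of $(X_1, B_1 + M_1)$ so that at least one prime divisor $E_0 \subseteq \lfloor B_1' \rfloor$ is contained in $\mathrm{Supp}(N_\sigma(K_{X_1'} + B_1' + M_1'))$. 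By the behaviour of the Nakayama--Zariski decomposition under crepant pullbacks (Lemma \ref{lem:resNQC} and the surrounding results in Section \ref{sec:prelim}), every prime component of $N_\sigma(K_{X_1} + B_1 + M_1)$ persists on $X_1'$, and an $f$-exceptional lc place extracted over such a component remains in the support of $N_\sigma$. This forces the lc threshold of $P_1' + N_1'$ with respect to $(X_1', B_1' + M_1')$ to be zero, establishing the balanced condition at the first step.

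I would then lift the given $(K_{X_1}+B_1+M_1)$-MMP step by step, decomposing each original divisorial contraction or flip into a finite sequence of divisorial contractions and flips on the $X_1'$-side that is compatible with $f$, using standard lifting techniques for dlt generalised pairs (cf.\ \cite{HH20}). At each intermediate step, the resulting model is crepant over the corresponding model in the original MMP, so Lemma \ref{lem:resNQC} and the MMP-compatibility of the Nakayama--Zariski decomposition ensure that the strict transform of $E_0$ stays in $\mathrm{Supp}(N_\sigma(K_{X_i'} + B_i' + M_i'))$ with coefficient $1$ in $B_i'$, unless it is contracted by a divisorial step. Whenever this happens, the richness of the initial extraction provides a replacement lc place playing the same role. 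Since any MMP admits only finitely many divisorial contractions (each strictly decreasing the Picard number), finitely many extractions suffice in principle. Termination of the lifted MMP is then equivalent to termination of the original MMP because each original step is replaced by only finitely many lifted steps.

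The main obstacle I expect is handling the case where $N_\sigma(K_{X_1}+B_1+M_1)$ has components lying outside the non-klt locus of $(X_1,B_1+M_1)$: over such a component no $f$-exceptional lc place can be directly extracted, and the Nakayama--Zariski negative part must be tracked through extra care. Overcoming this should rely on the structural properties of the Nakayama--Zariski decomposition under dlt modifications and MMP operations developed in Section \ref{sec:prelim}, together with a sufficiently clever choice of $f$ (possibly involving a preparatory MMP on $X_1$ or an iterative refinement of the dlt modification) so that all the relevant components of $N_\sigma$ are ultimately represented by divisors of coefficient $1$ on the $X_1'$-side. The bulk of the technical work lies in making this bookkeeping precise and reconciling it with the termination equivalence.
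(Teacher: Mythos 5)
There is a genuine gap, and it sits exactly at the point you flag as "the main obstacle" but do not resolve. Your construction is crepant: you take a dlt modification $f$ with $K_{X_1'}+B_1'+M_1'=f^*(K_{X_1}+B_1+M_1)$ and hope to arrange a divisor $E_0\subseteq\lfloor B_1'\rfloor$ lying in $\Supp N_\sigma(K_{X_1'}+B_1'+M_1')$. But a crepant modification cannot create log canonical places: if $(X_1,B_1+M_1)$ is klt (say $B_1=M_1=0$ and $K_{X_1}$ pseudoeffective but not nef), there are no divisorial valuations of discrepancy $-1$ at all, so $\lfloor B_1'\rfloor=0$ for every dlt modification and the log canonical threshold of $P+N$ is strictly positive on every crepant model; no choice of $f$, no "iterative refinement", and no preparatory (crepant) MMP can make it zero. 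Even in the genuinely non-klt case the same problem occurs whenever no log canonical centre meets $\sB_-(K_{X_1}+B_1+M_1)$: by Lemma \ref{lem:pullbackdiminished} and Lemma \ref{lem:nakayamazariskidiminished}, an lc place extracted over such a centre does not land in $\Supp N_\sigma$ upstairs. So the balanced condition simply cannot be achieved within the class of pairs crepant over $(X_1,B_1+M_1)$, and the subsequent bookkeeping (tracking $E_0$, "replacement lc places") has nothing to attach to.

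The paper's proof exploits the freedom in the statement that $(X_1',B_1'+M_1')$ need not be crepant over the original pair: it modifies the pair itself. One first reduces to $\Q$-factorial models via Lemmas \ref{lem:lifting_g} and \ref{lem:resNQC}(e), lets $t_i$ be the log canonical threshold of $P_\sigma(K_{X_i}+B_i+M_i)+N_\sigma(K_{X_i}+B_i+M_i)$ with respect to $(X_i,B_i+M_i)$, and uses the monotonicity Lemma \ref{lem:ascending} together with the ACC for generalised log canonical thresholds \cite[Theorem 1.5]{BZ16} to find $i_0$ with $t_i=t_{i_0}$ for all $i\geq i_0$. Setting $B_i^\circ:=B_i+t_{i_0}N_i$ and $M_i^\circ:=M_i+t_{i_0}P_i$ one gets $K_{X_i}+B_i^\circ+M_i^\circ=(1+t_{i_0})(K_{X_i}+B_i+M_i)$, so the very same maps $\pi_i$, $i\geq i_0$, form an MMP for the new log canonical pairs, and by construction the new log canonical threshold is zero at every step, i.e.\ the sequence is balanced; a minimal model survives by Lemma \ref{lem:resNQC}(d). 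Finally one lifts to dlt models by Lemma \ref{lem:lifting_g}, the balanced property passes to the lift by Corollary \ref{cor:balancedlifting}, and termination upstairs is equivalent to termination of the original sequence. If you want to salvage your write-up, the non-crepant change of coefficients by the stabilised threshold $t_{i_0}$ is the missing idea; without it the case of positive log canonical threshold is out of reach.
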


\subsection*{The main result}
We now propose the following conjecture.

\begin{conj}\label{con:mainconjecture}
Let $ (X_1, B_1+M_1) $ be a projective $\Q$-factorial NQC dlt generalised pair such that $K_{X_1}+B_1+M_1$ is pseudoeffective. Consider a $(K_{X_1}+B_1+M_1)$-MMP 
\begin{center}
\begin{tikzcd}[column sep = 0.8em, row sep = 1.75em]
(X_1,B_1+M_1) \arrow[rr, dashed, "\pi_1"] && (X_2,B_2+M_2) \arrow[rr, dashed, "\pi_2"] && (X_3,B_3+M_3) \arrow[rr, dashed, "\pi_3"] && \cdots. 
\end{tikzcd}
\end{center}
Let $S$ be a log canonical centre of\, $(X_1,B_1+M_1)$ which belongs to $\sB_-(K_{X_1}+B_1+M_1)$. Then there exists an index~$i$ such that $\pi_i$ is not an isomorphism at the generic point of the strict transform of $S$ on $X_i$.
\end{conj}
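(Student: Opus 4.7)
I would approach this by contradiction, assuming that every $\pi_i$ is an isomorphism at the generic point of the strict transform $S_i \subseteq X_i$ of $S$. Under this assumption $S_i$ persists as a well-defined subvariety of $X_i$ for every $i$, and since the MMP is an isomorphism along the generic point of $S_i$, it remains a log canonical centre of the dlt generalised pair $(X_i, B_i+M_i)$. The plan is to propagate the diminished-base-locus hypothesis along the sequence and extract an obstruction that the MMP cannot maintain indefinitely.

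The first step is a valuative translation of both hypotheses. Since $S$ is an LC centre of a $\Q$-factorial dlt generalised pair, there is a projective birational morphism $f\colon Y \to X_1$ from a $\Q$-factorial variety and a prime divisor $E \subseteq Y$ whose centre on $X_1$ equals $S$ and whose log discrepancy with respect to $(X_1, B_1+M_1)$ is zero. The assumption $S \subseteq \sB_-(K_{X_1}+B_1+M_1)$, by the standard interpretation of the diminished base locus in terms of asymptotic vanishing orders along divisorial valuations, should produce a divisor over $X_1$ with centre meeting $S$ along which $\sigma$ is strictly positive; after possibly enlarging $Y$, I would try to arrange this divisor to be $E$ itself, so that $\sigma_E\bigl(f^*(K_{X_1}+B_1+M_1)\bigr)>0$.

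The second step is to track $E$ through the MMP. By the contradiction hypothesis, the generic point of the centre of $E$ on $X_i$ is untouched by every $\pi_i$, hence $E$ continues to exist as a divisor over each $X_i$ with log discrepancy still zero. A monotonicity property for the Nakayama--Zariski negative part along $D$-negative MMP steps should then force $\sigma_E(K_{X_i}+B_i+M_i)$ to remain strictly positive for every $i$: each individual step can only decrease $\sigma_E$ when it affects the centre of $E$, which is exactly what the assumption rules out.

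The core difficulty, and what in my view places this statement genuinely at the level of a conjecture, is converting the persistence of the pair \emph{``log discrepancy zero together with strictly positive $\sigma_E$''} into an actual contradiction. One natural strategy is to pull the whole sequence back to a fixed resolution dominating $Y$ and argue that $E$ must eventually become extremal for some contraction or get extracted, contradicting the isomorphism assumption. An alternative, in the case where $(X_1, B_1+M_1)$ has a minimal model, is to invoke Proposition~\ref{pro:balanced} to reduce to a balanced MMP, so that the condition $\mathrm{lct}(X_i, B_i+M_i; P_i+N_i) = 0$ couples LC centres of the pair to the support of $P_i + N_i$ and provides the rigidity needed to force some $\pi_i$ to act on $S_i$. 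In either approach, the missing ingredient is a precise compatibility statement between LC centres and $\sB_-$ under MMP steps, and this is where I expect the main obstacle to lie.
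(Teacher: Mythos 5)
The statement you set out to prove is not a theorem of the paper at all: it is Conjecture~\ref{con:mainconjecture}, which the paper leaves open. The paper's only results about it are conditional or structural: Corollary~\ref{cor:B-termination} shows it \emph{follows from} the Termination of Flips conjecture (i.e.\ it holds whenever the given MMP terminates), and Proposition~\ref{pro:equivalence} shows it is \emph{equivalent} to Conjecture~\ref{con:mainconjecture2}, a statement about components of $N_\sigma$ on a dlt blowup being contracted by the lifted MMP. Your first two steps essentially reproduce this reformulation: passing to a model extracting a divisor $E$ with $a(E,X_1,B_1+M_1)=-1$ and centre $S$, and using the behaviour of $\sB_-$ under pullback together with $\Supp N_\sigma = $ divisorial part of $\sB_-$ (the paper does this via Lemmas~\ref{lem:dltblowup}, \ref{lem:pullbackdiminished} and \ref{lem:nakayamazariskidiminished}, and one needs, as the paper notes, that $S$ avoids the singular locus because the g-pair is dlt). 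That part is fine, but it is a translation, not a proof.

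The genuine gap is exactly the one you name yourself: converting the persistence of a valuation with log discrepancy $-1$ and $\sigma_E>0$ into a contradiction. No monotonicity statement available in the paper (or in the literature it cites) forces such a divisor to be contracted or to become extremal without already assuming termination; indeed Lemma~\ref{lem:resNQC}(b) only tells you the birational transform of $N_\sigma$ is killed once you \emph{reach} a minimal model, which is again a termination-type input — this is precisely how Corollary~\ref{cor:B-termination} works. Your fallback via Proposition~\ref{pro:balanced} is also circular relative to the paper's logic: the paper's Theorem~\ref{mainthm} uses Conjecture~\ref{con:mainconjecture} (together with lower-dimensional termination and the existence of a minimal model) to prove that a balanced sequence terminates, so balancedness cannot be used as an ingredient to establish the conjecture itself. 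In short, your proposal correctly identifies the reformulation and correctly locates the obstruction, but it does not close it — and the paper does not claim to either.
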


Here, $\sB_-$ denotes the \emph{diminished base locus}; we recall its definition and properties in Section~\ref{subsec:B-}. Conjecture~\ref{con:mainconjecture} is a consequence of the termination of flips conjecture; see Corollary~\ref{cor:B-termination}. Moreover, Conjecture~\ref{con:mainconjecture} is in fact equivalent to another statement, Conjecture~\ref{con:mainconjecture2}, which only involves components of the negative part of the Nakayama--Zariski decomposition of a dlt blowup of the generalised pair $ (X_1, B_1+M_1) $; this is proved in Proposition~\ref{pro:equivalence} below. Thus, though this is not obvious, Conjecture~\ref{con:mainconjecture} is actually a statement on the behaviour of the Nakayama--Zariski decomposition. This is an important point we want to convey in this paper.

The main result of this paper -- the following Theorem~\ref{mainthm} -- shows, together with Proposition~\ref{pro:balanced},  that Conjecture~\ref{con:mainconjecture} implies the termination of flips conjecture for pseudoeffective generalised pairs, assuming the existence of minimal models.

\begin{thm} \label{mainthm}
Assume the termination of flips for pseudoeffective NQC $\Q$-factorial dlt generalised pairs in dimension at most $n-1$.

Let $ (X_1, B_1+M_1) $ be a projective $\Q$-factorial NQC dlt generalised pair of dimension $n$ such that $K_{X_1}+B_1+M_1$ is pseudoeffective. Consider a balanced sequence of flips in a $(K_{X_1}+{B_1}+{M_1})$-MMP 
\begin{center}
\begin{tikzcd}[column sep = 0.8em, row sep = 1.75em]
(X_1,B_1+M_1) \arrow[rr, dashed, "\pi_1"] && (X_2,B_2+M_2) \arrow[rr, dashed, "\pi_2"] && (X_3,B_3+M_3) \arrow[rr, dashed, "\pi_3"] && \cdots .
\end{tikzcd}
\end{center}
Assume that Conjecture~\ref{con:mainconjecture} holds for this sequence of flips. If the generalised pair $ (X_1, B_1+M_1) $ has a minimal model, then the sequence terminates.
\end{thm}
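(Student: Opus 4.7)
The plan is to argue by contradiction: assume the balanced sequence of flips does not terminate. First, I verify that the hypotheses persist along the MMP: each $(X_i, B_i+M_i)$ remains a projective $\Q$-factorial NQC dlt generalised pair with $K_{X_i}+B_i+M_i$ pseudoeffective, and continues to admit a minimal model (obtained from the initial minimal model by composition with the flips). Since flips are isomorphisms in codimension one, the Nakayama--Zariski decomposition is transported naturally, $P_i=(\pi_{i-1})_*P_{i-1}$ and $N_i=(\pi_{i-1})_*N_{i-1}$, so the balanced property descends to every tail of the MMP, and it suffices to show that some tail terminates.

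Next, I invoke \emph{special termination}. Using the inductive hypothesis of termination in dimension at most $n-1$, applied to the MMPs induced on the normalisations of lc centres, I obtain an index $N$ such that for every $i\geq N$ the flip $\pi_i$ is an isomorphism at the generic point of every lc centre of $(X_i, B_i+M_i)$. Independently, the balanced condition at $X_N$ (cf.\ Remark \ref{rem:easycomputation} and Lemma \ref{lem:lct}) furnishes an lc centre $S$ of $(X_N, B_N+M_N)$ contained in $\Supp(P_N+N_N)$. Passing to a $\Q$-factorial dlt blowup via Proposition \ref{pro:balanced} and invoking the equivalence of Conjectures \ref{con:mainconjecture} and \ref{con:mainconjecture2} (Proposition \ref{pro:equivalence}), I may assume $S$ is in fact a component of $N_N$; hence $S\subseteq\Supp(N_N)\subseteq\sB_-(K_{X_N}+B_N+M_N)$.

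Finally, Conjecture \ref{con:mainconjecture} applied to the tail of the MMP starting at $X_N$ and to the centre $S$ produces some $j\geq N$ at which $\pi_j$ is not an isomorphism at the generic point of the strict transform of $S$ on $X_j$. But by the special termination step, this strict transform is still an lc centre of $(X_j, B_j+M_j)$, and every flip $\pi_i$ with $i\geq N$ is an isomorphism at every such lc centre -- contradiction. The main obstacle I anticipate lies in the middle paragraph: arranging, via a dlt blowup, that the lc centre supplied by the balanced condition is realised as a component of the negative part $N_\sigma$ (so that Conjecture \ref{con:mainconjecture2} is applicable), while simultaneously preserving the balanced property, the existence of a minimal model, and the compatibility of tails of the MMP before and after this reduction.
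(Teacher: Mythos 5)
Your skeleton (argue by contradiction, use special termination, then play Conjecture \ref{con:mainconjecture} on a log canonical centre supplied by the balanced condition against the termination statement) is close to the paper's, but the middle paragraph — which you yourself flag as the main obstacle — is a genuine gap, and it is exactly where the hypotheses of the theorem interact. What the balanced condition at $X_N$ actually gives (via Lemma \ref{lem:resNQC}(a),(d), which is where the minimal-model hypothesis enters and which you never invoke, together with Remark \ref{rem:easycomputation2}) is a divisorial valuation $E$ over $X_N$ with $a(E,X_N,B_N+M_N)=-1$ and $E\subseteq\Supp N_\sigma\big(f^*(K_{X_N}+B_N+M_N)\big)$ on a suitable log resolution $f$; its centre $S=c_{X_N}(E)$ need not lie in $\Supp(P_N+N_N)$ at all (the relevant multiplicity may come entirely from the $f$-exceptional divisor $f^*P_N-P_\sigma(f^*(\cdot))$), so your claim is unjustified — and containment in $\Supp(P_N+N_N)$ is in any case not what your final paragraph needs. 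The proposed repair, \enquote{pass to a dlt blowup and use Propositions \ref{pro:balanced} and \ref{pro:equivalence} to assume $S$ is a component of $N_N$}, is not a free reduction: it replaces the sequence by a lifted MMP of dlt blowups, after which your special-termination index $N$, the balanced property and the applicability of the conjecture must all be re-established upstairs. This is precisely the work the paper does: it extracts the lct-computing valuation $E$ on a dlt blowup of $X_1$, so that its strict transform $E_1$ is a component of $\lfloor B_1'\rfloor$ contained in $\Supp N_\sigma(K_{X_1'}+B_1'+M_1')\subseteq\sB_-(K_{X_1'}+B_1'+M_1')$, lifts the MMP by Lemma \ref{lem:lifting_g}, transfers the conjecture by Lemma \ref{lem:liftingtheconjecture}, and then gets a contradiction because the lifted MMP would have to contract the divisor $E_1$ although, by Theorem \ref{thm:specterm_g-pairs1} and \cite[Lemma 2.20(i)]{CT23}, it consists of flips only. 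Note also that the full special termination needed there is not a consequence of lower-dimensional termination alone, as your second paragraph suggests: restrictions to log canonical centres need not be pseudoeffective, and it is Conjecture \ref{con:mainconjecture} together with Lemma \ref{lem:restriction2} that supplies pseudoeffectivity before the inductive hypothesis can be applied.

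Your closing paragraph, on the other hand, is essentially sound, and all it requires is the statement $S\subseteq\sB_-(K_{X_N}+B_N+M_N)$, which does follow from $E\subseteq\Supp N_\sigma\big(f^*(K_{X_N}+B_N+M_N)\big)$ by Lemmas \ref{lem:nakayamazariskidiminished} and \ref{lem:pullbackdiminished}; combined with Lemma \ref{lem:shiftingtheconjecture} (needed to apply the conjecture to the tail starting at $X_N$) and with the fact that the weak form of special termination you use (isomorphism at the generic point of every log canonical centre for $i\gg0$) already follows from the finiteness of log canonical centres as in Step 1 of the proof of Theorem \ref{thm:specterm_g-pairs1}, this would close the argument without ever making $S$ divisorial, and with an endgame different from the paper's (a contradiction with the special-termination statement downstairs rather than with the flips-only property of the lifted MMP). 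So the correct repair of your middle paragraph is to replace the false $\Supp(P_N+N_N)$ claim and the vague dlt-blowup reduction by this $\sB_-$-statement; as written, however, that step is a real gap.
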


In order to describe briefly the strategy of the proof of Theorem~\ref{mainthm} and the role of the Nakayama--Zariski decomposition in it, we recall that the proofs of \cite[Theorem 1.3]{Bir07} and of \cite[Theorem 1.3]{CT23} proceed by contradiction and are based on the following two steps: (i) the so-called \emph{special termination} (this is the statement that the non-klt locus of the generalised pair eventually becomes disjoint from the flipping loci), and (ii) the completion of the proof by using special termination to construct an infinite strictly increasing sequence of log canonical thresholds which violates the ACC property of log canonical thresholds. Even if one is interested only in the termination of flips for pseudoeffective generalised pairs, non-pseudoeffective generalised pairs appear in both of these steps: in the first step because restrictions to log canonical centres can produce non-pseudoeffective generalised pairs in \emph{lower} dimensio
ns, and in the second step because modifying boundaries of generalised pairs may produce non-pseudoeffective generalised pairs in the \emph{same} dimension.

Considering Nakayama--Zariski decompositions instead of weak Zariski decompositions helps us resolve the first issue above by achieving, through a careful argument, that relevant log canonical centres do not come from the negative part of the Nakayama--Zariski decomposition. This is done in Section~\ref{sec:specterm}. The issue in the second step seems to be more subtle, and a simple modification of the arguments of \cite{CT23} does not seem to do the trick. Instead, we rely crucially on properties of the Nakayama--Zariski decomposition to show that the sequence in the theorem cannot be balanced, thus deriving a contradiction. This is done in Section~\ref{sec:mainresult}. We stress that Conjecture~\ref{con:mainconjecture} is fundamental in both of these steps.

\subsection*{Acknowledgments}
We would like to thank Nikolaos Tsakanikas for many useful comments, suggestions, and discussions, and J\'anos Koll\'ar, who pointed out that results in a previous version of this paper had to be stated more precisely. We would also like to thank the referees for extensive comments and suggestions.

\section{Preliminaries}\label{sec:prelim}

Throughout the paper, unless otherwise stated, we work with varieties that are normal, quasi-projective, and defined over $\C$.

If $f\colon X\to Z $ is a projective morphism between normal varieties and if $ D_1 $ and $ D_2 $ are two $ \R $-Cartier $\R$-divisors on $ X $, then $ D_1 $ and $ D_2 $ are \emph{$\R$-linearly equivalent over $ Z $}, denoted by $ D_1 \sim_{\R,Z} D_2 $, if there exists an $\R$-Cartier $\R$-divisor $G$ on $Z$ such that $D_1\sim_\R D_2+f^*G$.
An $\R$-divisor $D$ on $X$ is an \emph{NQC divisor} over $Z$ if it is a non-negative linear combination of $\Q$-Cartier divisors on $X$ which are nef over $Z$. The acronym NQC stands for \emph{nef\, $\Q$-Cartier combinations}.

We refer to \cite{KM98} for the definitions and basic results on the singularities of usual pairs and the MMP. We often quote the negativity lemma \cite[Lemma 3.39]{KM98}.

\subsection{An auxiliary result}
The following is a small generalisation of \cite[Lemma II.5.6]{Nak04} that we will use in the proof of Theorem~\ref{thm:specterm_g-pairs1}. We provide the proof for the benefit of the reader.

\begin{lem}\label{lem:pullbackpseff}
    Let $f\colon X\to Y$ be a surjective morphism of projective varieties, and let $D$ be an $\R$-Cartier $\R$-divisor on $X$. Then $f^* D$ is pseudoeffective if and only if $D$ is pseudoeffective.
\end{lem}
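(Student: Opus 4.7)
\medskip

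The statement is an equivalence, so I would prove each direction separately. I read the statement with $D$ on the target $Y$ (the only reading under which $f^*D$ is defined).

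For the implication that $D$ pseudoeffective implies $f^*D$ pseudoeffective, the plan is purely formal: the pullback of an effective Cartier divisor along any morphism of varieties is again an effective Cartier divisor, so the linear map $f^*\colon N^1(Y)_\R\to N^1(X)_\R$ is continuous and sends the convex cone generated by effective classes into its counterpart on $X$; hence it sends the closure of this cone, which is the pseudoeffective cone, into the pseudoeffective cone of $X$.

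For the converse, the plan is to reduce to the case where $f$ is generically finite, in which the result is Nakayama's original \cite[Lemma II.5.6]{Nak04}. Set $k:=\dim X-\dim Y$ and pick very general very ample divisors $H_1,\dots,H_k$ on $X$. By Bertini-type theorems, $X':=H_1\cap\cdots\cap H_k$ is a normal irreducible projective variety of dimension $\dim Y$, and $f|_{X'}\colon X'\to Y$ is surjective and generically finite. The crucial subpoint is to verify that $(f^*D)|_{X'}$ is pseudoeffective on $X'$. For this I would use that pseudoeffectivity descends to a very general member of a basepoint-free linear system of a very ample divisor: if $P$ is pseudoeffective on $X$ and $A$ is ample on $X$, then $P+\varepsilon A$ is big for every $\varepsilon>0$, hence by Kodaira's lemma $\R$-linearly equivalent to a sum of an ample and an effective divisor, which restricts to a big divisor on a very general very ample hyperplane section $H$; letting $\varepsilon\to 0$ and using closedness of the pseudoeffective cone on $H$ shows that $P|_H$ is pseudoeffective. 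Iterating this $k$ times gives that $(f^*D)|_{X'}=(f|_{X'})^*D$ is pseudoeffective on $X'$. An appeal to Nakayama's lemma in the generically finite case then yields that $D$ is pseudoeffective on $Y$.

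The main obstacle is this last reduction step in the non-smooth setting: one has to invoke Bertini-type theorems carefully to ensure that the successive very general hyperplane sections of the normal projective variety $X$ remain normal and irreducible, and that $f|_{X'}$ stays surjective with finite generic fibre. In addition, the iterated restriction of pseudoeffective divisors has to be phrased so that the countably many "very general" conditions in $\varepsilon$ and in the choice of the $H_i$ can be satisfied simultaneously; this is not difficult, but it is the only point in the argument that is not automatic.
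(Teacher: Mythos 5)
Your forward direction and most of the intermediate steps of the converse (Seidenberg/Bertini for normality, irreducibility and surjectivity of the very general complete intersection $X'$, and the restriction of pseudoeffective classes to very general members of a very ample system) are fine, but the last step is a genuine gap. The citation you lean on, \cite[Lemma II.5.6]{Nak04}, is -- as the paper's own treatment indicates by calling the present lemma a \enquote{small generalisation} and by invoking II.5.6 only for a morphism between desingularisations -- a statement about surjective morphisms of \emph{smooth} projective varieties. It is therefore not available for your generically finite morphism $f|_{X'}\colon X'\to Y$: the very general complete intersection $X'$ inside the normal variety $X$ is normal but in general singular, and, more importantly, $Y$ itself is only normal. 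If Nakayama's lemma applied verbatim to normal varieties there would be nothing to prove in the first place; and since it already treats arbitrary surjective morphisms of smooth varieties, your hyperplane-cutting reduction attacks the wrong difficulty -- it lowers the relative dimension, which is harmless anyway, while leaving the singularities of $Y$ (and of $X'$) untouched.

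The missing content is precisely the descent of pseudoeffectivity along a resolution $\theta\colon Y'\to Y$: granting that $\theta^*D$ is pseudoeffective on a smooth model, one must still conclude that $D$ is pseudoeffective on the normal variety $Y$. This is where the paper's proof does its actual work: it desingularises $Y$ and the main component of $X\times_Y Y'$, applies \cite[Lemma II.5.6]{Nak04} to the induced surjective morphism of smooth varieties to obtain that $\theta^*D$ is pseudoeffective, and then settles the remaining birational case by fixing an ample divisor $A$ on $Y$, observing that $\theta^*(D+\varepsilon A)$ is big, and using the equality $\kappa\big(Y',\theta^*(D+\varepsilon A)\big)=\kappa(Y,D+\varepsilon A)$ from \cite[Lemma II.3.11]{Nak04} to deduce that $D+\varepsilon A$ is big for every $\varepsilon>0$, hence $D$ is pseudoeffective. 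Your argument needs this step (or an equivalent one, e.g.\ pushing forward an $\R$-linear decomposition of $\theta^*(D+\varepsilon A)$ into an ample plus an effective divisor) somewhere; as written, it silently assumes the normal case of Nakayama's lemma, which is essentially the statement to be proved.
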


\begin{proof}
    One direction is immediate. For the converse, assume that $f^* D$ is pseudoeffective. Let $\theta\colon Y'\to Y$ be a desingularisation, and let $X'$ be a desingularisation of the main component of the fibre product $X\times_Y Y'$ with  induced morphisms $f'\colon X'\to Y'$ and $\theta'\colon X'\to X$.
\[
\xymatrix{ 
X' \ar[d]_{\theta'} \ar[r]^{f'} & Y' \ar[d]^{\theta}\\
X\ar[r]_f & Y.
}
\]
Then $\theta'^*f^*D$ is pseudoeffective, and since $\theta'^*f^*D=f'^*\theta^*D$, the divisor $\theta^*D$ is pseudoeffective by \cite[Lemma II.5.6]{Nak04}. Therefore, by replacing $f$ by $\theta$, it suffices to show the lemma when $f$ is birational.

To this end, let $A$ be an ample divisor on $Y$. Since for every $\varepsilon>0$ we have $\kappa(X,f^*D+\varepsilon f^*A)=\kappa(Y,D+\varepsilon A)$ by \cite[Lemma II.3.11]{Nak04}, it follows that all divisors $D+\varepsilon A$ are big. Therefore, $D$ is pseudoeffective.
\end{proof}

\subsection{Generalised pairs}

Generalised pairs, which are the main objects of the present paper, were introduced in \cite{BH14,BZ16}. Here we recall their definition and basic properties we need; we refer to \cite{HL22,LT22a} for a more detailed treatment.

\begin{dfn}\label{dfn:g-pairs}
	A \emph{generalised pair} or \emph{g-pair} $(X/Z,B+M)$ consists of a variety $ X $ equipped with  projective morphisms 
	$$  X' \overset{f}{\longrightarrow} X \longrightarrow Z , $$ 
	where $ f $ is birational and $ X' $ is normal, an effective $ \R $-divisor $B$ on $X$, and an $\R$-Cartier $\R$-divisor $M'$ on~$X'$ which is nef over $Z $ such that $ f_* M' = M $ and $ K_X + B + M $ is $ \R $-Cartier. We denote the g-pair simply by $ (X/Z,B+M) $ but implicitly remember the whole \emph{g-pair data} $X'\to X\to Z$ and $M'$. If additionally $M'$ is an NQC divisor on $ X' $, then the g-pair $(X/Z,B+M)$ is an \emph{NQC g-pair}.	
\end{dfn}

Note that in the above definition the variety $X'$ may always be chosen as a sufficiently high birational model of $X$. Unless otherwise stated, the variety $Z$ will always be assumed to be a point in this paper, in which case we denote the g-pair by $(X,B+M)$.

We now recall various classes of singularities of g-pairs.

\begin{dfn}\label{dfn:singularities}
	Let $ (X/Z,B+M) $ be a g-pair with data $ X' \overset{f}{\to} X \to Z $ and $ M' $. We may write 
	$$ K_{X'} + B' + M' \sim_\R f^* ( K_X + B + M ) $$
	for some $ \R $-divisor $ B' $ on $ X' $. Let $ E $ be a divisorial valuation over $ X $. We may assume that $E$ is a prime divisor on $ X' $; we denote its centre on $X$ by $c_X(E)$. The \emph{discrepancy of\, $ E $} with respect to $ (X,B+M) $ is $$ a (E, X, B+M) := {-} \mult_E B'.$$
	We say that the g-pair $ (X,B+M) $ is:   
	\begin{a-enumerate}
		\item \emph{klt} if $a (E, X, B+M) > -1 $ for all divisorial valuations $E$ over $X$; 
		\item \emph{log canonical} if $a (E, X, B+M) \geq -1 $ for all divisorial valuations $E$ over $X$; 
		\item \emph{dlt} if it is log canonical and if there exists an open subset $U\subseteq X$ such that the pair $(U,B|_U)$ is log smooth and such that if $a(E,X,B+M) = {-}1$ for some divisorial valuation $E$ over $X$, then the set $c_X(E)\cap U$ is non-empty and is a log canonical centre of $(U,B|_U)$.
	\end{a-enumerate}
	
		If $ (X,B+M) $ is a log canonical g-pair, then 
	\begin{enumerate}
		\item an irreducible subvariety $ S $ of $ X $ is a \emph{log canonical centre} of $ X $ if there exists a divisorial valuation $ E $ over $ X $ such that $$ a(E,X,B+M) = -1  \quad\text{and}\quad c_X(E) = S;$$
		
		\item the \emph{non-klt locus} of $ (X,B+M) $, denoted by $ \nklt(X,B+M) $, is the union of all log canonical centres of $ X $.
	\end{enumerate}
\end{dfn}

If $(X,B+M)$ is a log canonical g-pair, then the number of its log canonical centres is finite. It follows from the definition that if $ (X,B+M) $ is a dlt g-pair with $ \lfloor B \rfloor = 0 $, then $ (X,B+M) $ is klt. If $ (X,B+M) $ is a $ \Q $-factorial dlt g-pair, then, by definition and by \cite[Remark 4.2.3]{BZ16}, the underlying pair $ (X,B) $ is dlt and the log canonical centres of $ (X,B+M) $ coincide with those of $ (X,B) $. In particular, by \cite[Proposition~3.9.2]{Fuj07a} we obtain
	\[ \nklt(X,B+M) = \nklt(X,B) = \Supp \lfloor B \rfloor, \]
	and all log canonical centres of $(X,B+M)$ are normal. 

\begin{rem}\label{rem:easycomputation}
Let $ (X,B+M) $ be a g-pair with data $ X' \overset{f}{\to} X \to Z $ and $ M' $, let $P'$ be an $\R$-divisor on $X'$ which is nef over $Z$, set $P:=f_*P'$, and let $N$ be an effective $ \R $-divisor on $ X $. Set $F := f^*P - P'$. Then by the negativity lemma the $\R$-divisor $F$ is effective and $f$-exceptional.
An easy calculation shows that, for each prime divisor $E$ on $X'$ and for every real number $t$, we have
$$a\left(E, X, (B+tN)+(M+tP)\right) = a(E, X, B+M) - t\mult_E(f^*N + F).$$
Assume that 
$$\lambda:=\sup\left\{t\in\R\mid\left( X, (B+tN)+(M+tP)\right)\text{ is log canonical}\right\}$$
is a real number. Then $\lambda$ is the \emph{log canonical threshold of $P+N$ with respect to $(X,B+M)$}.
\end{rem}

The following easy lemma shows that the log canonical threshold is almost always well defined; see also  \cite[paragraph after Definition 4.3]{BZ16}.

\begin{lem}\label{lem:lct}
Let $ (X/Z,B+M) $ be a projective $\Q$-factorial g-pair, let $P$ be an $\R$-divisor on $X$ which is the pushforward of an $\R$-divisor on a high birational model of\, $X$ that is nef over $Z$, and let $N$ be an effective $ \R $-divisor on $ X $. Then there exists a positive real number $t$ such that $(X,(B+tN)+(M+tP))$ is not log canonical, unless $P$ is nef over $Z$ and $N=0$.
\end{lem}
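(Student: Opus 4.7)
The plan is to argue by contrapositive: assuming that $\big(X,(B+tN)+(M+tP)\big)$ is log canonical for every $t>0$, I will show that $P$ is nef over $Z$ and $N=0$.

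The starting point is to choose a birational model $f\colon X'\to X$ from the g-pair data which is high enough so that $P$ is the pushforward of an $\R$-divisor $P'$ on $X'$ nef over $Z$; such an $X'$ exists by hypothesis. Remark \ref{rem:easycomputation}, applied to the effective divisor $N$ and to $P'$, provides the effective $f$-exceptional divisor $F:=f^*P-P'$ together with the discrepancy identity
\[
a\big(E,X,(B+tN)+(M+tP)\big)=a(E,X,B+M)-t\mult_E(f^*N+F)
\]
for every prime divisor $E$ on $X'$. If some prime divisor $E$ on $X'$ satisfied $\mult_E(f^*N+F)>0$, then the right-hand side would drop strictly below $-1$ for $t$ large enough, contradicting log canonicity of the modified g-pair for every positive $t$. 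Since $f^*N+F$ is a priori effective, the conclusion is that $f^*N+F=0$, which forces both $f^*N=0$ and $F=0$ separately.

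From $f^*N=0$ and the fact that $f$ is birational I would conclude $N=0$: every prime component of $N$ has its strict transform on $X'$ appearing in $f^*N$ with the same positive coefficient, so $N$ cannot have any components. The identity $F=0$ gives $f^*P=P'$, so $f^*P$ is nef over $Z$. To transfer nefness down to $X$, I would apply the projection formula to the strict transform $C'\subset X'$ of an arbitrary irreducible curve $C\subset X$ contracted to a point in $Z$: since $f_*C'=C$, one has $P\cdot C=f^*P\cdot C'=P'\cdot C'\geq 0$, so $P$ is nef over $Z$. The only mild subtlety is that Remark \ref{rem:easycomputation} must be read with respect to the modified g-pair $\big(X,(B+tN)+(M+tP)\big)$, whose g-pair data on $X'$ uses the nef divisor $M'+tP'$; this is legitimate for every $t>0$, so no real obstacle remains.
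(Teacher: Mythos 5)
Your proof is correct and follows essentially the same route as the paper: pass to a model $X'$ carrying both $M'$ and a nef (over $Z$) divisor $P'$ with $f_*P'=P$, set $F=f^*P-P'$, and use the discrepancy formula of Remark \ref{rem:easycomputation} to see that any nonzero component of $f^*N+F$ forces failure of log canonicity for $t\gg0$. You merely spell out the final descent ($f^*N=0\Rightarrow N=0$ and $F=0\Rightarrow P$ nef over $Z$ via the projection formula), which the paper leaves implicit.
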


\begin{proof}
Let $f\colon X'\to X$ be a log resolution of $(X,B+N)$ such that there exist $\R$-divisors $M'$ and $P'$ on $X'$ which are nef over $Z$, with $M = f_*M'$ and $P=f_*P'$. Set $F := f^*P - P'$. Then by Remark~\ref{rem:easycomputation} the g-pair $(X,(B+tN)+(M+tP))$ will not be log canonical for $t\gg0$ whenever $f^*N\neq0$ or $F\neq0$, which implies the lemma.
\end{proof}

\begin{notation}
We will use the following notation throughout the paper. Let $(X,B+M)$ be a dlt g-pair, and let~$S$ be a log canonical centre of $(X,B+M)$. We define a dlt g-pair $(S,B_S +M_S)$ by adjunction, \textit{i.e.}~by the formula
$$K_S + B_S +M_S = (K_X + B +M)|_S$$
as in \cite[Definition 2.8, Proposition 2.10]{HL22}.
\end{notation}

\subsection{Dlt blowups and MMP}

The Minimal Model Program has recently been generalised to the category of g-pairs in \cite{HL23,LT22a}. The foundational results are analogous to those for the usual pairs; we refer to \cite[Sections 2.2 and 2.4]{LT22a} for a brief summary. We also refer to \cite[Section 2]{LMT23} for the definitions of minimal and log canonical models that we use in this paper.

\begin{notation}
Given a g-pair $(X_1,B_1+M_1)$ and a $(K_{X_1}+B_1+M_1)$-MMP
\begin{center}
		\begin{tikzcd}[column sep = 0.8em, row sep = 1.75em]
			(X_1,B_1+M_1) \arrow[rr, dashed, "\pi_1"] && (X_2,B_2+M_2) \arrow[rr, dashed, "\pi_2"] && (X_3,B_3+M_3) \arrow[rr, dashed, "\pi_3"] && \cdots,
		\end{tikzcd}
	\end{center}
the notation means implicitly that each $B_i$ is the strict transform of $B_1$, and that for each $i$ the divisors $M_1$ and $M_i$ are pushforwards of the same nef $\R$-divisor on a common birational model of $X_1$ and $X_i$.
\end{notation}

We  often use the next result on the existence of \emph{dlt blowups}; \textit{cf.} \cite[Proposition 3.10]{HL22} and \cite[Lemma~4.5]{BZ16}. We recall its proof as we will need the construction in some proofs later in the paper.

\begin{lem}\label{lem:dltblowup}
	Let $(X,B+M)$ be a log canonical g-pair with data $ \widehat X \overset{f}{\to} X \to Z $ and $ \widehat M $. Then, after possibly replacing $\widehat{X}$ with a higher model, there exist a $\Q$-factorial dlt g-pair $(X',B'+M')$ with data $ \widehat X \overset{g}{\to} X' \to Z $ and~$ \widehat M $, and a projective birational morphism $\pi \colon X' \to X$ such that 
	$$K_{X'} + B' + M' \sim_\R \pi^*(K_X + B +M) \quad\text{and}\quad B' = \pi_*^{-1}B + E,$$
 where $E$ is the sum of all the $\pi$-exceptional prime divisors. The g-pair $(X',B'+M')$ is a \emph{dlt blowup} of\, $(X,B+M)$.
	
	Furthermore, let $E_1,\dots,E_r$ be divisorial valuations over $X$ such that $a(E_i,X,B+M)={-}1$ for each $i$. Then we may assume that each $E_i$ is a divisor on $X'$ which is a component of\, $E$. 
\end{lem}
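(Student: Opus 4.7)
The plan is to construct $(X',B'+M')$ by running an MMP from a sufficiently high log resolution of $(X,B)$, and then to use the Negativity lemma to identify which exceptional divisors survive.

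First, after possibly replacing $f$ with a higher model, I may assume that $\widehat X$ is smooth, that $\Supp(f_*^{-1}B)$ together with the reduced $f$-exceptional divisor has simple normal crossings, and that each of the prescribed divisorial valuations $E_1,\dots,E_r$ is a prime divisor on $\widehat X$; this is achievable by standard resolution of singularities and justifies the phrase about passing to a higher model. Let $F_1,\dots,F_N$ be the $f$-exceptional prime divisors on $\widehat X$ and set $\Gamma := f_*^{-1}B + F_1 + \cdots + F_N$, so that $(\widehat X,\Gamma+\widehat M)$ is a $\Q$-factorial dlt g-pair with data $\widehat X\overset{\Id}{\to}\widehat X\to Z$ and $\widehat M$. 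Unwinding the definition of discrepancy one obtains
\[ K_{\widehat X}+\Gamma+\widehat M\sim_\R f^*(K_X+B+M)+F_0, \]
where $F_0 = \sum_{j=1}^{N}\bigl(1+a(F_j,X,B+M)\bigr)F_j$ is $f$-exceptional and, by the log canonicity of $(X,B+M)$, effective.

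Next I would run a $(K_{\widehat X}+\Gamma+\widehat M)$-MMP over $X$. Since $K_{\widehat X}+\Gamma+\widehat M\sim_{\R,X}F_0$ with $F_0$ effective and $f$-exceptional, this is an MMP on the effective exceptional divisor $F_0$ relative to $X$; its termination, in the generalised-pair setting, is precisely what underlies the construction of dlt blowups in \cite[Lemma~4.5]{BZ16} (see also \cite{HL23}). Let $\pi\colon X'\to X$ be the resulting projective birational morphism and let $B'$ and $M'$ denote the pushforwards of $\Gamma$ and $\widehat M$ to $X'$ respectively. Then $(X',B'+M')$ is $\Q$-factorial and dlt, since both properties are preserved at every step of the MMP, and $K_{X'}+B'+M'$ is nef over $X$.

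Finally, the pushforward $F_0'$ of $F_0$ to $X'$ satisfies $K_{X'}+B'+M'\sim_{\R,X}F_0'$, so $F_0'$ is simultaneously nef over $X$ and $\pi$-exceptional; by the Negativity lemma, $F_0'=0$. Hence every $f$-exceptional prime divisor with discrepancy strictly greater than $-1$ has been contracted, and the $\pi$-exceptional divisors on $X'$ are precisely the log canonical places of $(X,B+M)$ that appeared on $\widehat X$. In particular each $E_i$ survives as a component of the reduced $\pi$-exceptional divisor $E$, we have $B'=\pi_*^{-1}B+E$, and the relation $K_{X'}+B'+M'\sim_\R\pi^*(K_X+B+M)$ follows from the displayed equation together with $F_0'=0$. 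The main obstacle in this plan is the termination of the $(K_{\widehat X}+\Gamma+\widehat M)$-MMP over $X$; in the generalised-pair category this is delicate, but is handled by the fact that the relevant divisor is effective and exceptional over $X$, so the termination results from \cite{BZ16,HL23} apply.
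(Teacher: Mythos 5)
Your construction is the same as the paper's: pass to a log resolution on which the prescribed valuations $E_1,\dots,E_r$ appear, take $\Gamma=f_*^{-1}B+\sum F_j$, note $K_{\widehat X}+\Gamma+\widehat M\sim_{\R,X}F_0$ with $F_0=\sum\bigl(1+a(F_j,X,B+M)\bigr)F_j\geq0$ exceptional, and run a relative MMP over $X$ whose existence and termination are quoted from the g-pair literature (the paper cites \cite[Lemma 3.4, Proposition 3.9, Lemma 3.6]{HL22}). Your use of the Negativity lemma at the end to get $F_0'=0$, hence that all of $F_0$ is contracted and $K_{X'}+B'+M'\sim_\R\pi^*(K_X+B+M)$, is a perfectly good variant of the paper's appeal to the termination statement.

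However, there is a gap at exactly the point the \enquote{Furthermore} clause requires. From $F_0'=0$ you only learn that \emph{at least} the components of $F_0$ are contracted; you then assert that the $\pi$-exceptional divisors on $X'$ are \emph{precisely} the log canonical places appearing on $\widehat X$, i.e.\ that \emph{nothing outside} $\Supp F_0$ — in particular no $E_i$ — gets contracted. The Negativity lemma does not give this, and without it the survival of the $E_i$ is unproved. The missing (short, standard) argument is the one the paper makes explicit: since $K_{\widehat X}+\Gamma+\widehat M\sim_{\R,X}F_0$, at every step of this MMP over $X$ the contracted curves have negative intersection with the proper transform of $F_0$ and hence lie in its support; so the exceptional locus of each step is contained in $\Supp F_0$, any divisor contracted is a component of $F_0$, and consequently the divisor contracted by the whole MMP is exactly $F_0$. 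Since $a(E_i,X,B+M)=-1$ forces $E_i\nsubseteq\Supp F_0$, each $E_i$ survives on $X'$ as a component of $E$, and $B'=\pi_*^{-1}B+E$ follows. With this one observation inserted, your proof is complete and coincides with the paper's.
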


\begin{proof}
By passing to a higher birational model, we may assume that $f$ is a log resolution of $(X,B+M)$, \textit{i.e.}~$\widehat{X}$ is smooth and $\Supp(B)\cap \Supp(M)$ is a simple normal crossing divisor, such that each $E_i$ is a divisor on~$\widehat X$, and we write
    \begin{equation}\label{eq:logres}
        K_{\widehat X} + \widehat B + \widehat M  \sim_\R f^* (K_X + B + M )
    \end{equation}
    for an $ \R $-divisor $ \widehat B $ on $ \widehat X $. Set 
    $$F := \sum \left(1+a(P, X,B+M)\right) P,$$
    where the sum runs over all $f$-exceptional prime divisors $P$ on $\widehat X$. Then $F\geq0$, the support of $F$ does not contain any $E_i$, the g-pair $(\widehat X,(\widehat B+F)+\widehat M) $ is dlt, and by \eqref{eq:logres} we have
    \[
    K_{\widehat X} + \left(\widehat B+F\right) + \widehat M \sim_{\R,X} F.
    \]
    By \cite[Lemma 3.4]{HL22} we can run a $(K_{\widehat X} + (\widehat B+F) + \widehat M)$-MMP over $X$ with scaling of a divisor which is ample over $X$. By \cite[Proposition 3.9 and Lemma 3.6]{HL22}, this MMP contracts $F$ and terminates with a $\Q$-factorial dlt g-pair $(X',B'+M')$. Note that the exceptional locus at every step of this MMP is contained in the support of the strict transform of $F$; hence $F$ is precisely the divisor contracted in the MMP. Let $\pi\colon X'\to X$ be the induced birational map. The construction implies that $B' = \pi_*^{-1}B + E$, where $E$ is the sum of all the $\pi$-exceptional prime divisors, and the strict transform of each $E_i$ is contained in $E$. Therefore,~$\pi$ is the desired map.
\end{proof}

In the following lemma it is necessary to consider ample small quasi-flips instead of usual flips, as will become apparent in Step~\ref{th51-step3} of the proof of Theorem~\ref{thm:specterm_g-pairs1}. Recall that an \emph{ample small quasi-flip} is a commutative diagram
		\begin{center}
			\begin{tikzcd}
				(X,B+M) \arrow[rr, "\varphi", dashed] \arrow[dr, "f" swap] && (X',B'+M')\rlap{,} \arrow[dl, "f'"] \\
				& Y 
			\end{tikzcd}
		\end{center}
	where $(X,B+M)$ and $(X',B'+M')$ are g-pairs and $Y$ is normal, $f$ and $f'$ are projective birational morphisms and $\varphi$ is an isomorphism in codimension~$1$, $ \varphi_* B = B' $, $ M $ and $ M' $ are pushforwards of the same nef $\R$-divisor on a common birational model of $X$ and $X'$, $ {-}(K_X + B + M) $ is $f$-ample, and $ K_{X'} + B' + M' $ is $f'$-ample.

Lemma~\ref{lem:lifting_g} is similar to \cite[Lemma 3.2]{LMT23} and \cite[Theorem 2.14]{LT22a}; it removes the assumption in lower dimensions from \cite[Lemma 3.2]{LMT23} and makes that result more precise. We will use the lemma repeatedly in the paper: it allows us to pass from a sequence of ample small quasi-flips of log canonical g-pairs to an MMP of $\Q$-factorial dlt g-pairs starting from any given dlt blowup.
	
\begin{lem}\label{lem:lifting_g}
Let $ (X_1,B_1+M_1) $ be an NQC log canonical g-pair, and let $ f_1\colon (X_1',B_1'+M_1') \to (X_1,B_1+M_1)$ be a dlt blowup of\,  $ (X_1,B_1+M_1) $. Consider a sequence of ample small quasi-flips 
	\begin{center}
		\begin{tikzcd}[column sep = 0.8em, row sep = 1.75em]
			(X_1,B_1+M_1) \arrow[dr, "\theta_1" swap] \arrow[rr, dashed, "\pi_1"] && (X_2,B_2+M_2) \arrow[dl, "\theta_1^+"] \arrow[dr, "\theta_2" swap] \arrow[rr, dashed, "\pi_2"] && (X_3,B_3+M_3) \arrow[dl, "\theta_2^+"] \arrow[rr, dashed, "\pi_3"] && \cdots\rlap{.} \\
			& Z_1 && Z_2
		\end{tikzcd}
	\end{center}
	Then there exists a diagram
	\begin{center}
		\begin{tikzcd}[column sep = 0.8em, row sep = large]
			\left(X_1',B_1'+M_1'\right) \arrow[d, "f_1" swap] \arrow[rr, dashed, "\rho_1"] && \left(X_2',B_2'+M_2'\right) \arrow[d, "f_2" swap] \arrow[rr, dashed, "\rho_2"] && \left(X_3',B_3'+M_3'\right) \arrow[d, "f_3" swap] \arrow[rr, dashed, "\rho_3"] && \cdots 
			\\ 
			(X_1,B_1+M_1) \arrow[dr, "\theta_1" swap] \arrow[rr, dashed, "\pi_1"] && (X_2,B_2+M_2) \arrow[dl, "\theta_1^+"] \arrow[dr, "\theta_2" swap] \arrow[rr, dashed, "\pi_2"] && (X_3,B_3+M_3) \arrow[dl, "\theta_2^+"] \arrow[rr, dashed, "\pi_3"] && \cdots \rlap{,}\\
			& Z_1 && Z_2
		\end{tikzcd}
	\end{center}
	where for each $i \geq 1$ the map $ \rho_i \colon X_i' \dashrightarrow X_{i+1}' $ is a $(K_{X_i'}+B_i'+M_i')$-MMP over $ Z_i $ and the map $f_i $ is a dlt blowup of the g-pair $ (X_i,B_i+M_i) $. 
	
	In particular, the sequence at the top of the above diagram is an MMP for an NQC $\Q$-factorial dlt g-pair $ (X_1',B_1'+M_1') $.
\end{lem}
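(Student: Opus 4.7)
The natural approach is induction on $i$; the base case $i=1$ is the given dlt blowup $f_1$. In the inductive step, suppose $f_i\colon (X_i',B_i'+M_i')\to (X_i,B_i+M_i)$ has been built, and the goal is to construct $\rho_i$ and $f_{i+1}$. Writing $h_i:=\theta_i\circ f_i$ and using $K_{X_i'}+B_i'+M_i'\sim_\R f_i^*(K_{X_i}+B_i+M_i)$, the divisor $K_{X_i'}+B_i'+M_i'$ is not $h_i$-nef: any curve in a fibre of $h_i$ not contracted by $f_i$ pushes forward to a curve negative against $K_{X_i}+B_i+M_i$ since $-(K_{X_i}+B_i+M_i)$ is $\theta_i$-ample. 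By the relative MMP with scaling for NQC $\Q$-factorial dlt g-pairs established in \cite{HL22,HL23,LT22a}, I can start a $(K_{X_i'}+B_i'+M_i')$-MMP over $Z_i$ with scaling of an ample divisor; the plan is to show it terminates at a dlt blowup of $X_{i+1}$.

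Termination will follow from exhibiting an explicit minimal model over $Z_i$. By Lemma~\ref{lem:dltblowup} pick a dlt blowup $\widetilde f\colon (\widetilde X,\widetilde B+\widetilde M)\to (X_{i+1},B_{i+1}+M_{i+1})$ that extracts, in particular, every $f_i$-exceptional divisor that still has discrepancy $-1$ with respect to $(X_{i+1},B_{i+1}+M_{i+1})$. Then $K_{\widetilde X}+\widetilde B+\widetilde M=\widetilde f^{\,*}(K_{X_{i+1}}+B_{i+1}+M_{i+1})$ is the pullback of a $\theta_i^+$-ample divisor and hence nef over $Z_i$. Since $\pi_i$ is small over $Z_i$, comparing $p^*(K_{X_i}+B_i+M_i)$ and $q^*(K_{X_{i+1}}+B_{i+1}+M_{i+1})$ on a common resolution $p,q$ of $X_i$ and $X_{i+1}$ and invoking the Negativity lemma yields $a(E,X_i,B_i+M_i)\le a(E,X_{i+1},B_{i+1}+M_{i+1})$ for every valuation $E$, with strict inequality exactly when the centre of $E$ meets the $\theta_i$-exceptional locus. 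Combined with the identities $a(\cdot,X_i',B_i'+M_i')=a(\cdot,X_i,B_i+M_i)$ and $a(\cdot,\widetilde X,\widetilde B+\widetilde M)=a(\cdot,X_{i+1},B_{i+1}+M_{i+1})$, this shows that the induced $\sigma\colon X_i'\dashrightarrow \widetilde X$ is a birational contraction and that $(\widetilde X,\widetilde B+\widetilde M)$ is a minimal model of $(X_i',B_i'+M_i')$ over $Z_i$. The standard termination-with-scaling criterion for NQC dlt g-pairs from \cite{HL22,HL23,LT22a} therefore gives termination of the MMP at some $(X_{i+1}',B_{i+1}'+M_{i+1}')$.

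The output $X_{i+1}'$ is $\Q$-factorial NQC dlt, isomorphic in codimension one to $\widetilde X$ (any two minimal models of a dlt g-pair over a base are small-birational), and shares the same discrepancies as $\widetilde X$ by the Negativity lemma; consequently the induced morphism $X_{i+1}'\to X_{i+1}$ is itself a dlt blowup, which I take as $f_{i+1}$. This closes the induction and produces the claimed diagram. The concluding ``in particular'' assertion is immediate: the concatenation of the $\rho_i$ is a sequence of MMP steps starting from the NQC $\Q$-factorial dlt g-pair $(X_1',B_1'+M_1')$, and the NQC, $\Q$-factorial and dlt properties are preserved along any such MMP.

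The delicate step is the minimality verification in the second paragraph: showing that a carefully chosen dlt blowup $\widetilde X$ of $X_{i+1}$ is a minimal model of $(X_i',B_i'+M_i')$ over $Z_i$. The smallness of $\pi_i$ is essential, as it identifies the sets of non-exceptional prime divisors on $X_i$ and $X_{i+1}$ and lets the discrepancy increase from the Negativity lemma on the quasi-flip transfer to the desired inequalities between $X_i'$ and $\widetilde X$; once this is in place, termination with scaling and the identification of the output as a dlt blowup of $X_{i+1}$ are routine.
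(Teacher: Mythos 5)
Your overall strategy is the same as the paper's (obtain termination of a $(K_{X_i'}+B_i'+M_i')$-MMP with scaling over $Z_i$ from the existence of a suitable model over $Z_i$, then identify the output as a dlt blowup of $X_{i+1}$), but the step you yourself flag as delicate -- the minimality verification -- has a genuine gap. You cannot, in general, choose the dlt blowup $\widetilde f\colon \widetilde X\to X_{i+1}$ so that the induced map $\sigma\colon X_i'\dashrightarrow \widetilde X$ is a birational contraction. Lemma \ref{lem:dltblowup} only allows you to force a prescribed finite set of log canonical places to appear on $\widetilde X$; its construction extracts \emph{all} divisors of discrepancy $-1$ present on the chosen log resolution, and since $(X_{i+1},B_{i+1}+M_{i+1})$ is merely log canonical (not dlt), $\widetilde X$ will typically be forced to extract log canonical places (for instance over non-divisorial log canonical centres) that are not divisors on the particular dlt blowup $X_i'$. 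Then $\sigma^{-1}$ contracts divisors, $\sigma$ is not a birational contraction, and $(\widetilde X,\widetilde B+\widetilde M)$ is at best a minimal model of $(X_i',B_i'+M_i')$ over $Z_i$ \emph{in the sense of Birkar--Shokurov} (the extra divisors do have discrepancy $-1$ over $X_i'$, so they are allowed there with coefficient one), not a minimal model in the sense used here. The way to repair this is exactly the paper's route: note that, since $\pi_i$ is small, discrepancies do not decrease and $K_{X_{i+1}}+B_{i+1}+M_{i+1}$ is ample over $Z_i$, the g-pair $(X_{i+1},B_{i+1}+M_{i+1})$ is a minimal model of $(X_i,B_i+M_i)$ over $Z_i$ (\cite[Lemma 2.8]{LMT23}); then \cite[Lemma 3.10]{HL23} gives a Birkar--Shokurov minimal model for the dlt blowup $(X_i',B_i'+M_i')$ over $Z_i$, and \cite[Lemma 2.10]{LT22a} gives termination of an MMP with scaling of an ample divisor over $Z_i$ -- no explicit construction of $\widetilde X$ is needed.

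A secondary gap: you assert ``the induced morphism $X_{i+1}'\to X_{i+1}$'' without constructing it. Being isomorphic in codimension one to $\widetilde X$ does not by itself produce a morphism to $X_{i+1}$. The paper obtains $f_{i+1}$ from the fact that $(X_{i+1},B_{i+1}+M_{i+1})$, being ample over $Z_i$, is the log canonical model of $(X_i',B_i'+M_i')$ over $Z_i$, so any minimal model over $Z_i$ admits a morphism to it with crepant pullback (\cite[Lemma 2.12]{LMT23}); that crepant pullback is also what makes $f_{i+1}$ a dlt blowup. With these two repairs your induction closes as in the paper.
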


\begin{rem}
Throughout the paper, we will refer to diagrams as in Lemma~\ref{lem:lifting_g} as \emph{lifted MMP diagrams}.
\end{rem}

\begin{proof}[Proof of Lemma~\ref{lem:lifting_g}]
    The proof follows by repeating verbatim the beginning of the proof of \cite[Lemma~2.13]{LT22a}. We reproduce it here for the benefit of the reader. 
    
		By the definition of ample small quasi-flips and by \cite[Lemma 2.8]{LMT23}, the g-pair $(X_2,B_2+M_2)$ is a minimal model of $(X_1,B_1+M_1)$ over $Z_1$; hence by \cite[Lemma 3.10]{HL23} the g-pair $ (X_1',B_1'+M_1') $ has a minimal model in the sense of Birkar--Shokurov over $ Z_1 $. Therefore, by \cite[Lemma 2.10]{LT22a} there exists a $ (K_{X_1'} + B_1'+M_1') $-MMP with scaling of an ample divisor over $Z_1$ which terminates with a minimal model $ (X_2',B_2'+M_2') $ of $ (X_1',B_1'+M_1') $ over $Z_1$. Since $(X_2,B_2+M_2)$ is the log canonical model of $(X_1',B_1'+M_1')$ over $ Z_1 $, by \cite[Lemma 2.12]{LMT23} there exists a morphism $ f_2 \colon X_2' \to X_2 $ such that 
		$$ K_{X_2'} + B_2'+M_2' \sim_\R f_2^*\left(K_{X_2} + B_2 + M_2\right) . $$
		In particular, $ (X_2',B_2'+M_2') $ is a dlt blowup of $ (X_2,B_2+M_2) $. By continuing this procedure analogously, we obtain the result.
\end{proof}

We will need the following lemma in the proofs in Section~\ref{sec:specterm}. It is extracted from the proof of \cite[Theorem 5.1]{LMT23}; we have added more details for the benefit of the reader.

\begin{lem}\label{lem:valuations}	
	Let $ (X_1,B_1+M_1) $ be a $ \Q $-factorial NQC dlt g-pair, and consider a sequence of flips:
	\begin{center}
		\begin{tikzcd}[column sep = 0.8em, row sep = 1.75em]
			(X_1,B_1+M_1) \arrow[dr, "\theta_1" swap] \arrow[rr, dashed, "\pi_1"] && (X_2,B_2+M_2) \arrow[dl, "\theta_1^+"] \arrow[dr, "\theta_2" swap] \arrow[rr, dashed, "\pi_2"] && (X_3,B_3+M_3) \arrow[dl, "\theta_2^+"] \arrow[rr, dashed, "\pi_3"] && \cdots. \\
			& Z_1 && Z_2
		\end{tikzcd}
	\end{center}
	Let $S_1$ be a log canonical centre of\, $(X_1,B_1+M_1) $. For each $i\geq1$ assume inductively that $\pi_i|_{S_i}$ is an isomorphism in codimension $1$, and let $S_{i+1}$ be the strict transform of\, $S_i$ on $X_{i+1}$. Then there exists a positive integer $N$ such that 
	$$(\pi_i|_{S_i})_*B_{S_i}=B_{S_{i+1}}\quad\text{for every } i\geq N.$$
\end{lem}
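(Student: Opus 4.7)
Since each $\pi_i|_{S_i}$ is an isomorphism in codimension one, the strict transform induces a bijection between the prime divisors of $S_i$ and of $S_j$ for all $i,j\geq 1$. For a prime divisor $\bar E$ on $S_1$ with strict transform $\bar E_i$ on $S_i$, set $b_i(\bar E):=\mult_{\bar E_i}B_{S_i}\in[0,1]$. The conclusion $(\pi_i|_{S_i})_*B_{S_i}=B_{S_{i+1}}$ is equivalent to $b_i(\bar E)=b_{i+1}(\bar E)$ for every $\bar E$, so the plan is to show that each sequence $\{b_i(\bar E)\}_{i\geq 1}$ is eventually constant and that only finitely many $\bar E$ are ever relevant.

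The first step is monotonicity, via the Negativity lemma. On a common resolution $W$ of $\pi_i\colon X_i\dashrightarrow X_{i+1}$ with morphisms $p\colon W\to X_i$ and $q\colon W\to X_{i+1}$, the fact that discrepancies are non-decreasing under a flip gives
$$p^*(K_{X_i}+B_i+M_i)\geq q^*(K_{X_{i+1}}+B_{i+1}+M_{i+1})$$
on $W$. Restricting to the strict transform $S_W$ of $S_i$ on $W$, and using compatibility of adjunction with pullback along a log canonical centre, one deduces the analogous inequality for the adjunction divisors on $S_W$. Since $\pi_i|_{S_i}$ is small, the $p|_{S_W}$- and $q|_{S_W}$-exceptional divisors on $S_W$ coincide, so that $(p|_{S_W})_*(q|_{S_W})^*$ realises the strict transform $(\pi_i|_{S_i})^{-1}_*$; after pushing the inequality forward to $S_i$ and cancelling the canonical and moduli parts (the latter using that $M_i$ and $M_{i+1}$ are pushforwards of a common nef $\R$-divisor on a common birational model), one obtains $B_{S_i}\geq(\pi_i|_{S_i})^{-1}_*B_{S_{i+1}}$. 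In particular, each $\{b_i(\bar E)\}_{i\geq 1}$ is non-increasing. Combined with the finiteness of $\Supp B_{S_1}$, the supports $\Supp B_{S_i}$ form a non-increasing chain of subsets of the finite set $\Supp B_{S_1}$, so they stabilise after some index $N_1$ to a finite set $\mathcal E$.

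The final step is to upgrade monotone convergence to eventual constancy for each $\bar E\in\mathcal E$. The plan is to invoke the DCC property of the coefficient set arising in $\Q$-factorial NQC dlt g-pair adjunction: because the coefficients of $B_i$ equal those of $B_1$ and the moduli piece descends from a fixed nef $\R$-divisor on a common birational model, the possible values of $b_i(\bar E)$ lie in a DCC subset of $[0,1]$ depending only on $(X_1,B_1+M_1)$. A non-increasing sequence in a DCC set stabilises, so each $\{b_i(\bar E)\}$ becomes constant after some $N(\bar E)$; since $\mathcal E$ is finite, $N:=\max\bigl(N_1,\max_{\bar E\in\mathcal E}N(\bar E)\bigr)$ works uniformly. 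The main obstacle is this DCC invocation, which relies on the structural adjunction theory for NQC g-pairs rather than on elementary manipulations.
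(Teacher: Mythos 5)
Your proposal is correct and follows essentially the same route as the paper: the monotonicity of the adjunction coefficients under the induced small maps is exactly what the paper quotes as [LMT23, Lemma 2.8(iv)] (and is best extracted as a discrepancy inequality by comparing the two crepant pullbacks on the common resolution $S_W$, rather than by pushing the $\R$-linear equivalence forward and ``cancelling'', which by itself does not yield a coefficient-wise inequality), and your stabilisation step rests on the same structural description of NQC g-pair adjunction coefficients that the paper invokes via [LMT23, Definition 2.15 and Lemma 2.16], merely repackaged as a DCC statement for non-increasing sequences. The paper likewise first stabilises the supports $\Supp B_{S_i}$ and then derives a contradiction from the finiteness of possible coefficient values below a threshold $\gamma<1$, so the two arguments differ only in presentation.
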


\begin{proof}
We first claim that
\begin{equation}\label{eq:30}
\Supp B_{S_{i+1}}\subseteq \Supp (\pi_i|_{S_i})_*B_{S_i}\quad\text{for every } i.
\end{equation}
Indeed, without loss of generality we may assume that $i=1$. Assume that $E_2$ is a component of $B_{S_2}$ such that its strict transform $E_1$ on $S_1$ is not a component of $B_{S_1}$. Then 
$$0=a\left(E_1,S_1,B_{S_1} +M_{S_1}\right) \leq a\left(E_2,S_2,B_{S_2} +M_{S_2}\right)={-}\mult_{E_2}B_{S_2}<0 $$
by \cite[Lemma 2.8(iv)]{LMT23}, giving a contradiction. This proves \eqref{eq:30}.

In particular, since $B_{S_1}$ has finitely many components, by \eqref{eq:30} and by relabelling, we may assume that
\begin{equation}\label{eq:30a}
\Supp B_{S_{i+1}}=\Supp \left(\pi_i|_{S_i}\right)_*B_{S_i}\quad\text{for every } i.
\end{equation}

Now, let $E_1$ be a component of $B_{S_1}$, and set inductively $E_{i+1}:=(\pi_i|_{S_i})_*E_i$ for all $i$. Then $E_i$ is a component of $B_{S_i}$ by \eqref{eq:30a}; hence 
	\begin{equation}\label{eq:3}
	a\left(E_i,S_i,B_{S_i}+M_{S_i}\right)={-}\mult_{E_i}B_{S_i}<0\quad\text{for all }i.
	\end{equation}
Therefore, to prove the lemma it suffices to show that
	\begin{equation}\label{eq:3a}
	a\left(E_i,S_i,B_{S_i} +M_{S_i}\right) = a\left(E_{i+1},S_{i+1},B_{S_{i+1}}+M_{S_{i+1}}\right) \quad\text{for all }i\gg0.
	\end{equation}
	Assume towards a contradiction that \eqref{eq:3a} does not hold. Since $a(E_i,S_i,B_{S_i} +M_{S_i}) \leq a(E_{i+1},S_{i+1},B_{S_{i+1}}+M_{S_{i+1}})$ for all $i$ by \cite[Lemma 2.8(iv)]{LMT23}, we infer by \cite[Lemma 2.16(i)]{LMT23} that there exist a $\gamma\in(0,1)$ and infinitely many distinct discrepancies $a(E_i,S_i,B_{S_i}+M_{S_i})\geq{-}\gamma$. Thus, by \eqref{eq:3} the sequence $\{\mult_{E_i}B_{S_i}\}$ contains infinitely many distinct values not larger than $\gamma$. But this contradicts \cite[Lemma 2.16(ii)]{LMT23}, as can be seen by taking into account \cite[Definition 2.15]{LMT23}.
\end{proof}

\subsection{Nakayama--Zariski decomposition}

Given a smooth projective variety $X$ and a pseudoeffective $\R$-divisor $D$ on $X$, Nakayama \cite{Nak04} defined a decomposition $ D = P_\sigma (D) + N_\sigma (D) $, known as \emph{the Nakayama--Zariski decomposition} of $D$. This decomposition can be extended to the singular setting; see for instance \cite[Section 4]{BH14} and \cite[Section 2.2]{Hu20}. Here we consider only the case of $\Q$-factorial varieties and refer to \cite{Nak04} for the proofs of basic properties of the decomposition.

\begin{dfn}
    Let $X$ be a $\Q$-factorial projective variety, and let $\Gamma$ be a prime divisor on $X$. If $D$ is a big $\R$-divisor on $X$, then we set $|D|_\R:=\{\Delta\geq 0 \mid \Delta \sim_\R D\}$ and 
    \[\sigma_\Gamma (D) := \inf \left\{ \mult_\Gamma \Delta \mid \Delta\in |D|_\R \right\}.\]
    If $D$ is a pseudoeffective $\R$-divisor on $X$, then we pick an ample $\R$-divisor $A$ on $X$ and define
    \[\sigma_\Gamma (D) := \lim_{\varepsilon\downarrow 0} \sigma_\Gamma (D+\varepsilon A).\]
   Note that $\sigma_\Gamma(D)$ does not depend on the choice of $A$ and is compatible  with the definition above for big divisors. Set
   \[N_\sigma (D) := \sum_\Gamma \sigma_\Gamma(D)\cdot \Gamma\quad\text{and}\quad P_\sigma:=D-N_\sigma(D),\] 
   where the formal sum runs through all prime divisors $\Gamma$ on $X$. Both $N_\sigma(D)$ and $P_\sigma(D)$ are $\R$-divisors on $X$, and the decomposition $ D = P_\sigma (D) + N_\sigma (D) $ is the \emph{Nakayama--Zariski decomposition} of $D$.
\end{dfn}

The next lemma shows that the above definition is equivalent to the more general definition in \cite[Section 4]{BH14} when the underlying variety is $\Q$-factorial.

\begin{lem}\label{lem:Psigma}
  Let $X$ be a $\Q$-factorial projective variety, and let $D$ be a pseudoeffective $\R$-divisor on $X$. Let $f\colon Y\to X$ be a desingularisation of $X$. Then
  \[P_\sigma (D)= f_*P_\sigma (f^*D) \quad\text{and}\quad N_\sigma(D)= f_* N_\sigma(f^*D).\] 
\end{lem}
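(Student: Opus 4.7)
The plan is to establish the second equality $N_\sigma(D)=f_*N_\sigma(f^*D)$; the first then follows by subtraction from the identity $D=f_*f^*D$. Since $f_*$ erases every $f$-exceptional prime divisor and sends every non-exceptional prime $\widetilde\Gamma$ on $Y$ to the unique prime $\Gamma$ on $X$ of which it is the strict transform, this reduces to proving
\begin{equation*}
\sigma_\Gamma(D)=\sigma_{\widetilde\Gamma}(f^*D)
\end{equation*}
for every prime divisor $\Gamma$ on $X$ with strict transform $\widetilde\Gamma$ on $Y$.

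First I would handle the case in which $D$ is big. Here $\Q$-factoriality of $X$ is essential: the pullback $f^*\Gamma$ is a well-defined effective $\Q$-Cartier divisor on $Y$, and $\widetilde\Gamma$ appears in $f^*\Gamma$ with coefficient one and in $f^*\Gamma_j$ with coefficient zero for every prime $\Gamma_j\neq\Gamma$. Consequently, for any effective $\Delta\sim_\R D$ on $X$, the pullback $f^*\Delta\in|f^*D|_\R$ satisfies $\mult_{\widetilde\Gamma}(f^*\Delta)=\mult_\Gamma\Delta$; and conversely, for any effective $\Delta'\sim_\R f^*D$ on $Y$, the pushforward $f_*\Delta'\in|D|_\R$ satisfies $\mult_\Gamma(f_*\Delta')=\mult_{\widetilde\Gamma}(\Delta')$, since $f$ is an isomorphism at the generic point of $\widetilde\Gamma$ and pushforward of a principal divisor is principal. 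Taking infima on both sides yields the desired equality whenever $D$ is big.

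For pseudoeffective but not necessarily big $D$, I would fix an ample $\R$-divisor $A$ on $X$ and apply the big case to $D+\varepsilon A$ for each $\varepsilon>0$ to obtain
\begin{equation*}
\sigma_{\widetilde\Gamma}(f^*D+\varepsilon f^*A)=\sigma_\Gamma(D+\varepsilon A).
\end{equation*}
As $\varepsilon\downarrow 0$ the right-hand side tends to $\sigma_\Gamma(D)$ by the definition of $\sigma_\Gamma$. The main obstacle is to show that the left-hand side tends to $\sigma_{\widetilde\Gamma}(f^*D)$: the subtlety is that $f^*A$ is big and nef on $Y$ but not ample, so one cannot appeal directly to the defining limit for $\sigma_{\widetilde\Gamma}(f^*D)$ using this perturbation. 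I plan to bypass this by invoking Kodaira's lemma on $Y$ to decompose $f^*A=A_Y+F$ with $A_Y$ ample on $Y$ and $F$ effective, and then comparing $\sigma_{\widetilde\Gamma}(f^*D+\varepsilon f^*A)$ with $\sigma_{\widetilde\Gamma}(f^*D+\varepsilon A_Y)$ via subadditivity of $\sigma$, the discrepancy being at most $\varepsilon\,\mult_{\widetilde\Gamma}F$, which vanishes in the limit. Combined with the lower semi-continuity of $\sigma$ on the pseudoeffective cone, both standard features of Nakayama's decomposition established in \cite{Nak04}, this forces the two defining limits for $\sigma_{\widetilde\Gamma}(f^*D)$ to agree and completes the argument.
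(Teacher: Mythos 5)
Your proposal is correct and follows essentially the same route as the paper: reduce to the equality $\sigma_\Gamma(D)=\sigma_{\widetilde\Gamma}(f^*D)$ for each prime divisor $\Gamma$ on $X$, prove it for the big perturbations $D+\varepsilon A$ via the pullback/pushforward correspondence between $|D+\varepsilon A|_\R$ and $|f^*D+\varepsilon f^*A|_\R$ (which is where $\Q$-factoriality enters), and then let $\varepsilon\downarrow0$. The only difference is minor: where the paper justifies the limit on $Y$ with the merely nef-and-big perturbation $f^*A$ by citing \cite[Lemma III.1.7(2)]{Nak04}, you reprove that step by hand using Kodaira's lemma, subadditivity of $\sigma$ and its lower semicontinuity on the pseudoeffective cone, which is a valid substitute.
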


\begin{proof}
It suffices to show the second equality. If $A$ is an ample $\R$-Cartier divisor on $X$, then there is a bijection between the sets $|D+ \varepsilon A|_\R$ and $|f^*D+ \varepsilon f^*A|_\R$ for any $\varepsilon>0$; see for instance the proof of \cite[Lemma 2.3]{LMT23}. Therefore, for any prime divisor $\Gamma$ on $X$ and its strict transform $\Gamma':= f_*^{-1}\Gamma$ on $Y$, we have
$$ \sigma_{\Gamma}(D+\varepsilon A) = \sigma_{\Gamma'}(f^*D+\varepsilon f^*A). $$
Letting $\varepsilon\to 0$ and using \cite[Lemma III.1.7(2)]{Nak04}, we obtain $ \sigma_{\Gamma}(D) = \sigma_{\Gamma'}(f^*D)$, which shows the lemma.
\end{proof}

One of the most important properties of the Nakayama--Zariski decomposition relevant for this paper is contained in the following lemma; a generalisation is in Lemma~\ref{lem:restriction2} below.

\begin{lem}\label{lem:restriction}
  Let $X$ be a $\Q$-factorial projective variety, and let $D$ be a pseudoeffective $\R$-divisor on $X$. Let $\Gamma$ be a prime divisor on $X$ which is not a component of $N_\sigma(D)$. Assume that $\Gamma$ is a normal variety. Then the divisor $D|_\Gamma$ is pseudoeffective.
\end{lem}

Here and elsewhere in the paper, we interpret pseudoeffectivity in the sense of the $\R$-linear equivalence class; see also \cite[Section II.2, p.\ 37 and Remark II.5.8]{Nak04}.

\begin{proof}
Let $f\colon Y\to X$ be a desingularisation of $X$ and let $\Gamma':= f_*^{-1}\Gamma$. Applying Lemma~\ref{lem:pullbackpseff} to the morphism $f|_{\Gamma'}\colon \Gamma'\to \Gamma$, we see that, by replacing $X$ by $Y$ and $D$ by $f^*D$, it suffices to assume that $X$ is smooth. 

Fix an ample divisor $A$ on $X$. By \cite[Lemma III.1.7(3)]{Nak04} for each $\varepsilon>0$ there exists an $\R$-divisor $\Delta_\varepsilon\in|D+\varepsilon A|_\R$ such that $\Gamma\nsubseteq\Supp\Delta_\varepsilon$. In particular, $(D+\varepsilon A)|_\Gamma\sim_\R\Delta_\varepsilon|_\Gamma$ is pseudoeffective, and we let~$\varepsilon\to 0$.
\end{proof}

We will need the following lemma throughout the paper. In particular, part~\eqref{lem:resNQC-a} shows that a pseudoeffective g-pair $(X,B+M)$ has a minimal model if and only if the Nakayama--Zariski decomposition of $K_X+B+M$ is its weak Zariski decomposition.

\begin{lem}\label{lem:resNQC}
   Let $(X,B+M)$ be a projective $\Q$-factorial NQC log canonical g-pair such that $K_X+B+M$ is pseudoeffective. If $\varphi\colon (X,B+M)\dashrightarrow (X',B'+M')$ is a composition of divisorial contractions and flips in a $(K_X+B+M)$-MMP, then:
   \begin{a-enumerate}
   \item\label{lem:resNQC-a} $(X,B+M)$ has a minimal model if and only if there exists a desingularisation $f\colon Y\to X$ such that $P_\sigma( f^*(K_X+B+M)) $ is an NQC divisor on $Y$;
   \item\label{lem:resNQC-b} $\varphi_*N_\sigma(K_X+B+M)=N_\sigma(K_{X'}+B'+M')$, and if\, $(X',B'+M')$ is a minimal model of\, $(X,B+M)$, then $\varphi_*N_\sigma(K_X+B+M)=0$;
    \item\label{lem:resNQC-c} if\, $P$ is a prime divisor contracted by $\varphi$, then $P\subseteq\Supp N_\sigma (K_X+B+M)$;
    \item\label{lem:resNQC-d} $(X,B+M)$ has a minimal model if and only if\, $(X',B'+M')$ has a minimal model; 
    \item\label{lem:resNQC-e} if\, $(Z,B_Z+M_Z)$ is a projective $\Q$-factorial NQC log canonical g-pair with a birational morphism $\pi\colon Z\to X$ and if there exists an effective $\pi$-exceptional $\R$-divisor $E$ on $Z$ with the property that 
$$K_Z+B_Z+M_Z\sim_\R \pi^*(K_X+B+M)+E,$$
then $(X,B+M)$ has a minimal model if and only if\, $(Z,B_Z+M_Z)$ has a minimal model.
   \end{a-enumerate} 
\end{lem}

\begin{proof}
For~\eqref{lem:resNQC-a}, see for instance \cite[Theorem I]{TX24},
whereas~\eqref{lem:resNQC-b} follows from \cite[Lemma 4.1(5)]{BH14}. For~\eqref{lem:resNQC-c}, let $(p,q)\colon W\to X\times X'$ be a smooth resolution of indeterminacies of $\varphi$. By the negativity lemma there exists an effective $q$-exceptional $\R$-divisor $E$ on $W$ such that
$$p^*(K_X+B+M)\sim_\R q^*(K_{X'}+B'+M')+E$$
and $p_*^{-1}P\subseteq\Supp E$. Then by Lemma~\ref{lem:Psigma} and by \cite[Lemma~2.4]{LP20a}, we have
$$N_\sigma(K_X+B+M)=p_*N_\sigma\left(p^*(K_X+B+M)\right)=p_*N_\sigma\left(q^*(K_{X'}+B'+M')\right)+p_*E,$$
which proves~\eqref{lem:resNQC-c}, and
$$P_\sigma\left(p^*(K_X+B+M)\right)\sim_\R P_\sigma\left(q^*(K_{X'}+B'+M')\right).$$
Therefore, we obtain~\eqref{lem:resNQC-d} by applying~\eqref{lem:resNQC-a} to $(X,B+M)$ and to $(X',B'+M')$. Finally, for~\eqref{lem:resNQC-e} we note that for each desingularisation $\rho\colon W\to Z$ we have $P_\sigma(\rho^*(K_Z+B_Z+M_Z))\sim_\R P_\sigma(\rho^*\pi^*(K_X+B+M))$ by \cite[Lemma~2.4]{LP20a}, and we conclude by applying~\eqref{lem:resNQC-a} to $(X,B+M)$ and to $(Z,B_Z+M_Z)$.
\end{proof}

We will need the following remark several times in this paper.

\begin{rem}\label{rem:easycomputation2}
Let $ (X,B+M) $ be a $\Q$-factorial g-pair such that there exists a log resolution $f\colon X'\to X$ of $(X,B+M)$, where the $\R$-divisor $P':=P_\sigma(f^*(K_X+B+M))$ is NQC. We have $P:=P_\sigma(K_X+B+M)=f_*P'$ by Lemma~\ref{lem:Psigma}, and by passing to a higher resolution and by \cite[Corollary III.5.17]{Nak04}, we may assume that there exists an NQC divisor $M'$ on $X'$ such that $M = f_*M'$. Setting $F := f^*P - P'$ and $N:=N_\sigma(K_X+B+M)$, we have
$$f^*N+F=f^*(N+P) - P'=N_\sigma\left(f^*(K_X+B+M)\right);$$
hence by Remark~\ref{rem:easycomputation}, for each prime divisor $E$ on $X'$ and for every real number $t$, we have
$$a\left(E, X, (B+tN)+(M+tP)\right) = a(E, X, B+M) - t\mult_E N_\sigma\left(f^*(K_X+B+M)\right).$$
\end{rem}

\subsection{Diminished base locus}\label{subsec:B-}

If $X$ is a projective variety and if $D$ is a pseudoeffective $\R$-Cartier $\R$-divisor on $X$, then the \emph{stable base locus} of~$D$ is
$$\sB(D):=\bigcap_{D'\in|D|_\R}\Supp D',$$
whereas the \emph{diminished base locus} of $D$ is
$$\sB_-(D):=\bigcup_{A\text{ ample on }X}\sB(D+A);$$
the diminished base locus only depends on the numerical equivalence class of $D$ and is a countable union of closed subsets of $X$. We will need a few known results on the relationship between the diminished base locus and the negative part of the Nakayama--Zariski decomposition.

\begin{lem}\label{lem:pullbackdiminished}
Let $f\colon Y\to X$ be a surjective morphism between normal projective varieties, and let $D$ be a pseudoeffective $\R$-Cartier $\R$-divisor on $X$. If\, $X_\mathrm{sing}$ is the singular locus of\, $X$, then 
$$f^{-1}\sB_-(D)\cup f^{-1}\left(X_\mathrm{sing}\right)=\sB_-(f^*D)\cup f^{-1}\left(X_\mathrm{sing}\right)$$
and $\sB_-(f^*D)\subseteq f^{-1}\sB_-(D)$.
\end{lem}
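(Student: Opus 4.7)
My plan is first to prove the inclusion $\sB_-(f^*D)\subseteq f^{-1}\sB_-(D)$ via a contrapositive perturbation argument, and then to derive the equality modulo $f^{-1}(X_\mathrm{sing})$ by reducing the reverse inclusion on the preimage of the smooth locus to a statement on smooth varieties via resolution of singularities.

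For the inclusion, fix ample divisors $A_X$ on $X$ and $A_Y$ on $Y$, and let $y\in Y$ with $x:=f(y)\notin\sB_-(D)$. For every $\delta>0$ choose $\Delta_\delta\in|D+\delta A_X|_\R$ with $x\notin\Supp\Delta_\delta$, so that $f^*\Delta_\delta\in|f^*D+\delta f^*A_X|_\R$ avoids $y$. Given $\epsilon>0$, since $A_Y$ is ample and $f^*A_X$ is nef, the $\R$-divisor $\epsilon A_Y-\delta f^*A_X$ is ample on $Y$ for $\delta>0$ sufficiently small; pick an effective representative $E\in|\epsilon A_Y-\delta f^*A_X|_\R$ avoiding the single point $y$. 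Then $f^*\Delta_\delta+E\in|f^*D+\epsilon A_Y|_\R$ avoids $y$, so $y\notin\sB(f^*D+\epsilon A_Y)$ for every $\epsilon>0$, hence $y\notin\sB_-(f^*D)$.

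For the equality, in view of the inclusion just proved it remains to show that $f^{-1}\sB_-(D)\setminus f^{-1}(X_\mathrm{sing})\subseteq\sB_-(f^*D)$. Let $y\in f^{-1}\sB_-(D)$ with $x:=f(y)$ a smooth point of $X$. By resolving the graph of $f$, choose resolutions $\pi_X\colon\tilde X\to X$ (which we may arrange to be an isomorphism above $X_\mathrm{sm}$) and $\pi_Y\colon\tilde Y\to Y$, together with a surjective morphism $\tilde f\colon\tilde Y\to\tilde X$ satisfying $\pi_X\circ\tilde f=f\circ\pi_Y$. Let $\tilde x:=\pi_X^{-1}(x)$, a single point, and choose $\tilde y\in\pi_Y^{-1}(y)\cap\tilde f^{-1}(\tilde x)$. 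The argument then proceeds in three steps: from $x\in\sB_-(D)$ deduce $\tilde x\in\sB_-(\pi_X^*D)$, exploiting that $\pi_X$ is an isomorphism over $x$ so that pushforwards of effective representatives on $\tilde X$ near $\tilde x$ yield effective representatives on $X$ near $x$; then conclude $\tilde y\in\sB_-(\tilde f^*\pi_X^*D)=\sB_-(\pi_Y^*f^*D)$ via the smooth-variety analogue of the lemma for $\tilde f$; finally apply the already-proven first inclusion to $\pi_Y$ to obtain $y=\pi_Y(\tilde y)\in\sB_-(f^*D)$.

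The main obstacle is the middle step, namely the smooth-variety reverse inclusion: for a surjective morphism $\tilde f\colon\tilde Y\to\tilde X$ of smooth projective varieties and a pseudoeffective $\R$-Cartier divisor $\tilde D$ on $\tilde X$, the inclusion $\tilde f^{-1}\sB_-(\tilde D)\subseteq\sB_-(\tilde f^*\tilde D)$. I would attack this by contrapositive: assuming $\tilde y\notin\sB_-(\tilde f^*\tilde D)$, produce effective representatives of $\tilde D+\delta A_{\tilde X}$ avoiding $\tilde x$ by descending from effective representatives of $\tilde f^*\tilde D+\epsilon A_{\tilde Y}$ avoiding $\tilde y$. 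A Stein factorisation $\tilde f=g\circ h$ reduces the problem to the cases of a surjection with connected fibres and a finite morphism; in the connected-fibre case, restriction to a sufficiently general complete-intersection subvariety through $\tilde y$ reduces to the finite case, while in the finite case the pushforward together with the trace map on sections produces the desired representatives, using the smoothness of $\tilde X$ at $\tilde x$ in an essential way. The $f^{-1}(X_\mathrm{sing})$ correction in the lemma's statement accounts precisely for the failure of this descent at singular points of $X$, where $D$ need not be locally Cartier and the pushforward identities used in step (i) above break down.
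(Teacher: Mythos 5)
Your first inclusion is proved correctly: fixing one ample divisor on $Y$ and absorbing $\delta f^*A_X$ into $\epsilon A_Y$ is exactly the standard argument (and is, in fact, the first paragraph of the proof of the result the paper invokes). Note that the paper itself does not prove this lemma at all: it simply cites \cite[Proposition~2.5]{Leh13} for the equality and observes that the inclusion is contained in the first paragraph of that proof. So the real question is whether your self-contained argument for the reverse inclusion $f^{-1}\sB_-(D)\setminus f^{-1}(X_{\mathrm{sing}})\subseteq\sB_-(f^*D)$ works, and there it has a genuine gap.

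The gap is the ``whole fibre'' problem, and it occurs at the heart of your middle step. Your hypothesis in the contrapositive is that the \emph{single} point $\tilde y$ lies outside $\sB_-(\tilde f^*\tilde D)$, so you can only produce effective representatives of $\tilde f^*\tilde D+\epsilon A_{\tilde Y}$ whose support misses $\tilde y$; but every descent mechanism you propose needs representatives missing the \emph{entire} fibre over the target point. In the finite case, for a finite $p\colon V\to \tilde X$ the norm/pushforward of a section or of an effective $\R$-divisor vanishes (resp.\ contains $\tilde x$ in its support) as soon as the section vanishes at \emph{any} point of $p^{-1}(\tilde x)$, so avoiding $\tilde y$ alone gives nothing; and your complete-intersection reduction makes this worse, since a general complete intersection through $\tilde y$ will meet the positive-dimensional fibre in further points over which you have no control (moreover its target after Stein factorisation is only normal, so the smoothness you say is essential is not available there). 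Repairing this requires an extra idea you never supply: e.g.\ passing to a Galois closure and using that $\sB_-$ of the ($G$-invariant) pullback class is invariant under the deck group, so that one fibre point outside $\sB_-$ forces the whole fibre outside, together with an argument producing a single member of an $\R$-linear system avoiding a finite set; or, for the connected-fibre part, using $f_*\OO_Y=\OO_X$ so that sections of genuine pulled-back linear systems descend, combined with a description of $\sB_-$ on smooth varieties through honest linear systems $|\lceil mD\rceil+H|$. A smaller but real gap of the same nature sits in your step (i): pushing forward a member of $|\pi_X^*D+\delta A_{\tilde X}|_\R$ gives a member of $|D+\delta\pi_{X*}A_{\tilde X}|_\R$, not of $|D+\epsilon A_X|_\R$, so you must first add an effective member of $|\epsilon\pi_X^*A_X-\delta A_{\tilde X}|_\R$ avoiding $\tilde x$, which needs the (standard but unmentioned) fact that for $\delta$ small this base locus is contained in $\Exc(\pi_X)$.
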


\begin{proof}
The first statement is \cite[Proposition 2.5]{Leh13}, whereas the second statement follows from the first paragraph of the proof of \textit{loc.\ cit.}
\end{proof}

\begin{lem}\label{lem:nakayamazariskidiminished}
Let $X$ be a $\Q$-factorial projective variety, and let $D$ be a pseudoeffective $\R$-divisor on $X$. Then $N_\sigma(D)$ is the divisorial part of\, $\sB_-(D)$.
\end{lem}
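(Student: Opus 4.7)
The plan is to reduce the lemma to the case of smooth projective varieties, where the analogous statement is classical: on a smooth projective variety $Y$, a prime divisor $\Gamma'$ satisfies $\sigma_{\Gamma'}(E) > 0$ if and only if $\Gamma' \subseteq \sB_-(E)$ for any pseudoeffective $\R$-divisor $E$ on $Y$. This is essentially \cite[Lemma V.1.3]{Nak04}.

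So I fix a desingularisation $f\colon Y \to X$ and a prime divisor $\Gamma$ on $X$ with strict transform $\Gamma' := f_*^{-1}\Gamma$. The proof of Lemma \ref{lem:Psigma} gives $\sigma_\Gamma(D) = \sigma_{\Gamma'}(f^*D)$, hence $\Gamma$ is a component of $N_\sigma(D)$ if and only if $\Gamma'$ is a component of $N_\sigma(f^*D)$; by the smooth case, this is equivalent to $\Gamma' \subseteq \sB_-(f^*D)$. It therefore suffices to show that $\Gamma' \subseteq \sB_-(f^*D)$ if and only if $\Gamma \subseteq \sB_-(D)$.

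One direction is immediate from the inclusion $\sB_-(f^*D) \subseteq f^{-1}\sB_-(D)$ of Lemma \ref{lem:pullbackdiminished}, since $\Gamma = f(\Gamma')$. For the converse, assume $\Gamma \subseteq \sB_-(D)$. Since $X$ is normal, $X_{\mathrm{sing}}$ has codimension at least two in $X$, so $\Gamma \nsubseteq X_{\mathrm{sing}}$ and hence $\Gamma' \nsubseteq f^{-1}(X_{\mathrm{sing}})$. The equality in Lemma \ref{lem:pullbackdiminished} then yields
\[ \Gamma' \setminus f^{-1}(X_{\mathrm{sing}}) \subseteq f^{-1}\sB_-(D) \setminus f^{-1}(X_{\mathrm{sing}}) = \sB_-(f^*D) \setminus f^{-1}(X_{\mathrm{sing}}), \]
so a non-empty Zariski open subset of the irreducible variety $\Gamma'$ is contained in $\sB_-(f^*D)$. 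Fixing an ample divisor $A_Y$ on $Y$ and writing $\sB_-(f^*D) = \bigcup_{n \geq 1} \sB\bigl(f^*D + \tfrac{1}{n} A_Y\bigr)$ as an increasing countable union of Zariski closed subsets, I then invoke the Baire category theorem applied to $\Gamma'$ in the Euclidean topology (each proper Zariski closed subvariety of $\Gamma'$ has empty Euclidean interior) to conclude that some $\sB\bigl(f^*D + \tfrac{1}{n} A_Y\bigr) \cap \Gamma'$ must equal $\Gamma'$, giving $\Gamma' \subseteq \sB_-(f^*D)$.

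The main technical subtlety, and the only place where real care is needed, is this last passage: $\sB_-(f^*D)$ is merely a countable union of closed subsets rather than a closed subset itself, so upgrading \enquote{$\Gamma' \cap V \subseteq \sB_-(f^*D)$ for a dense open $V$} to \enquote{$\Gamma' \subseteq \sB_-(f^*D)$} requires the Baire-type argument indicated above.
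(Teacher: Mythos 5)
Your proof is correct and takes essentially the same route as the paper's: desingularise, transfer the statement via Lemma \ref{lem:pullbackdiminished} and the equality $\sigma_\Gamma(D)=\sigma_{\Gamma'}(f^*D)$ from (the proof of) Lemma \ref{lem:Psigma}, and invoke Nakayama's result in Chapter V on the smooth model. The only differences are that you spell out, via the Baire category argument, the passage from \enquote{a dense open subset of $\Gamma'$ lies in $\sB_-(f^*D)$} to \enquote{$\Gamma'\subseteq\sB_-(f^*D)$} --- a point the paper's proof leaves implicit, since $\sB_-$ is only a countable union of closed sets --- and that you also record the easy reverse inclusion, which the paper does not write out.
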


\begin{proof}
Let $f\colon Y\to X$ be a desingularisation of $X$, and let $X_\mathrm{sing}$ be the singular locus of $X$. Let $\Gamma$ be a prime divisor in $\sB_-(D)$, and set $\Gamma':=f_*^{-1}\Gamma$. Then $\Gamma'\nsubseteq f^{-1}(X_\mathrm{sing})$ as $\codim_X X_\mathrm{sing}\geq2$, and since
$$f^{-1}\sB_-(D)\cup f^{-1}(X_\mathrm{sing})=\sB_-(f^*D)\cup f^{-1}(X_\mathrm{sing})$$
by Lemma~\ref{lem:pullbackdiminished}, we have that $\Gamma'\subseteq\sB_-(f^*D)$. Then $\Gamma'\subseteq \Supp N_\sigma(f^*D)$ by \cite[Theorem V.1.3]{Nak04};  hence $\Gamma\subseteq\Supp N_\sigma(D)$ by Lemma~\ref{lem:Psigma}, as desired.
\end{proof}

The following generalisation of Lemma~\ref{lem:restriction} will be crucial in the proof of Theorem~\ref{thm:specterm_g-pairs1}.

\begin{lem}\label{lem:restriction2}
  Let $X$ be a $\Q$-factorial projective variety, and let $D$ be a pseudoeffective $\R$-divisor on $X$. Let $S$ be a normal subvariety of\, $X$ which is not a subset of\, $\sB_-(D)$. Then the divisor $D|_S$ is pseudoeffective.
\end{lem}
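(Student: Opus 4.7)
The plan is to mimic the proof of Lemma \ref{lem:restriction}, but without reducing to the smooth case. The desingularisation reduction is less convenient here because $S$ has arbitrary codimension and is not assumed to be a prime divisor; in particular, $S$ could be contained in the singular locus of $X$, in which case the strict transform of $S$ on a resolution behaves badly and Lemma \ref{lem:pullbackpseff} is not directly available. Instead, I would work directly with the definitions of $\sB_-(D)$ and $\sB(\cdot)$ on the $\Q$-factorial variety $X$.

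The main step is as follows. Fix an ample $\R$-Cartier divisor $A$ on $X$. For every $\varepsilon>0$ the divisor $\varepsilon A$ is ample, so $\sB(D+\varepsilon A)\subseteq \sB_-(D)$ by the definition of the diminished base locus. The hypothesis $S\not\subseteq \sB_-(D)$ then gives $S\not\subseteq \sB(D+\varepsilon A)$, and hence by the definition of the stable base locus there exists $\Delta_\varepsilon\in |D+\varepsilon A|_\R$ with $S\not\subseteq \Supp\Delta_\varepsilon$.

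Since $X$ is $\Q$-factorial, each prime component of $\Delta_\varepsilon$ is $\R$-Cartier on $X$; combined with the normality of $S$ and the fact that $S\not\subseteq \Supp\Delta_\varepsilon$, the restriction $\Delta_\varepsilon|_S$ is a well-defined effective $\R$-Cartier $\R$-divisor on $S$. As $(D+\varepsilon A)|_S \sim_\R \Delta_\varepsilon|_S$, this shows that $D|_S+\varepsilon A|_S$ is pseudoeffective on $S$. Letting $\varepsilon\to 0$ and using that $A|_S$ is ample on $S$ together with the closedness of the pseudoeffective cone in $N^1(S)_\R$, I conclude that $D|_S$ is pseudoeffective.

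The only real subtlety is the restriction step itself: it is essential that the $\Q$-factoriality of $X$ (ensuring $\R$-Cartierness of each prime component of $\Delta_\varepsilon$) combines with the normality of $S$ and the condition $S\not\subseteq \Supp \Delta_\varepsilon$ to produce a genuine effective representative of $(D+\varepsilon A)|_S$ on $S$, rather than merely an $\R$-linear equivalence class. Once this is in place, the $\varepsilon\to 0$ limit proceeds exactly as in the proof of Lemma \ref{lem:restriction}.
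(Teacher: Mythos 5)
Your proof is correct, but it follows a genuinely different route from the paper. The paper reduces to the divisorial case: it takes a log resolution $f\colon Y\to X$ factoring through the blowup of $X$ along $S$, so that $S$ is dominated by a smooth $f$-exceptional prime divisor $T$; it then combines Lemma \ref{lem:pullbackdiminished} and Lemma \ref{lem:nakayamazariskidiminished} to get $T\nsubseteq\Supp N_\sigma(f^*D)$, applies Lemma \ref{lem:restriction} on $Y$, and descends to $S$ via Lemma \ref{lem:pullbackpseff} applied to $f|_T\colon T\to S$. You instead argue directly on $X$ from the definitions: since $\sB(D+\varepsilon A)\subseteq\sB_-(D)$ and $D+\varepsilon A$ is big (so $|D+\varepsilon A|_\R\neq\emptyset$ -- a point worth stating explicitly, since it is what guarantees a member $\Delta_\varepsilon$ with $S\nsubseteq\Supp\Delta_\varepsilon$), you restrict $\Delta_\varepsilon$ to $S$, using $\Q$-factoriality to restrict component by component, and conclude by closedness of the pseudoeffective cone. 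Your argument is more elementary and self-contained: it avoids the resolution, Nakayama's machinery behind Lemma \ref{lem:nakayamazariskidiminished}, and Lemmas \ref{lem:pullbackpseff} and \ref{lem:pullbackdiminished} altogether, and (given Lemma \ref{lem:nakayamazariskidiminished} to translate the hypothesis) it even recovers Lemma \ref{lem:restriction} as the special case where $S$ is a prime divisor; what the paper's route buys is that it stays entirely within lemmas already established for other purposes and handles the restriction step on a smooth model where $N_\sigma$-theory applies verbatim. One small point of rigour, shared equally by the paper's own proof of Lemma \ref{lem:restriction}: the identification $(D+\varepsilon A)|_S\sim_\R\Delta_\varepsilon|_S$ is best read at the level of numerical (or pullback) classes, which is all that pseudoeffectivity requires; so this is a presentational remark, not a gap.
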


\begin{proof}
Let $f\colon Y\to X$ be a log resolution of $S$ in $X$ which factorises through the blowup of $X$ along $S$. Then there exists a smooth prime $f$-exceptional divisor $T$ on $X$ such that $f(T)=S$. Since 
$$\Supp N_{\sigma}(f^*D)\subseteq\sB_-(f^*D)\subseteq f^{-1}\sB_-(D)$$
by Lemmas~\ref{lem:pullbackdiminished} and~\ref{lem:nakayamazariskidiminished}, and since $S\nsubseteq\sB_-(D)$ by assumption, we conclude that 
$$ T\nsubseteq \Supp N_{\sigma}(f^*D). $$
Therefore, by Lemma~\ref{lem:restriction} we have that $(f^*D)|_T$ is pseudoeffective; hence $D|_S$ is pseudoeffective, as can be seen by applying Lemma~\ref{lem:pullbackpseff} to the surjective morphism $f|_T\colon T\to S$.
\end{proof}

\section{On Conjecture~\ref{con:mainconjecture}}

In this section we prove several technical results which explain how Conjecture~\ref{con:mainconjecture} applies after we change an MMP either by lifting it to another MMP or by relabelling the indices in an MMP. Along the way we show several results mentioned in the introduction.

We start with an easy lemma which shows that in several crucial proofs in this paper we are allowed to \emph{relabel the indices} in a sequence of maps in an MMP.

\begin{lem}\label{lem:shiftingtheconjecture}
     Let $ (X_1,B_1+M_1) $ be a projective $\Q$-factorial NQC dlt g-pair such that $K_{X_1}+B_1+M_1$ is pseudoeffective. Consider a $(K_{X_1}+{B_1}+{M_1})$-MMP 
\begin{center}
\begin{tikzcd}[column sep = 0.8em, row sep = 1.75em]
(X_1,B_1+M_1) \arrow[rr, dashed, "\pi_1"] && (X_2,B_2+M_2) \arrow[rr, dashed, "\pi_2"] && (X_3,B_3+M_3) \arrow[rr, dashed, "\pi_3"] && \cdots .
\end{tikzcd}
\end{center}
If Conjecture~\ref{con:mainconjecture} holds for this MMP, then for each index $i$ Conjecture~\ref{con:mainconjecture} holds for the part of this MMP starting with the g-pair $(X_i,B_i+M_i)$.
\end{lem}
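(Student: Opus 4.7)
Fix an index $i \geq 1$, and let $S_i$ be a log canonical centre of $(X_i, B_i+M_i)$ contained in $\sB_-(K_{X_i}+B_i+M_i)$. The plan is to build a companion log canonical centre $S_1$ of $(X_1, B_1+M_1)$ in $\sB_-(K_{X_1}+B_1+M_1)$, apply Conjecture~\ref{con:mainconjecture} to $S_1$, and then transport the resulting index back to $S_i$. Accordingly, I would first pick a divisorial valuation $E$ with $c_{X_i}(E) = S_i$ and $a(E, X_i, B_i+M_i) = -1$. Since discrepancies are non-decreasing along the steps of a log canonical MMP and each g-pair $(X_k, B_k+M_k)$ is log canonical, we must have $a(E, X_k, B_k+M_k) = -1$ for every $k \leq i$; setting $S_1 := c_{X_1}(E)$, this is a log canonical centre of $(X_1, B_1+M_1)$.

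The key technical point will be to show $S_1 \subseteq \sB_-(K_{X_1}+B_1+M_1)$. On a smooth common resolution $(p,q)\colon W \to X_1 \times X_i$ of the composition $\pi_{i-1} \circ \cdots \circ \pi_1$, iterating the Negativity lemma yields
$$p^*(K_{X_1}+B_1+M_1) = q^*(K_{X_i}+B_i+M_i) + F$$
for some effective $q$-exceptional $\R$-divisor $F$. Realising $E$ as a prime divisor on $W$ (after a further blowup if necessary), the identity $\mult_E F = {-}a(E, X_1, B_1+M_1) + a(E, X_i, B_i+M_i) = 0$ forces the generic point of $E$ to lie outside $\Supp F$, so $\sB_-(p^*(K_{X_1}+B_1+M_1))$ and $\sB_-(q^*(K_{X_i}+B_i+M_i))$ agree at the generic point of $E$. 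Combining this with Lemma~\ref{lem:pullbackdiminished} applied to both $p$ and $q$ (the smoothness of $W$ lets one absorb the singular loci of $X_1$ and $X_i$), one deduces $S_1 \subseteq \sB_-(K_{X_1}+B_1+M_1)$.

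Applying Conjecture~\ref{con:mainconjecture} to $S_1$ then yields a smallest index $j_0$ at which $\pi_{j_0}$ fails to be an isomorphism at the generic point of the forward strict transform of $S_1$ on $X_{j_0}$; by minimality, this forward strict transform coincides with $c_{X_k}(E)$ for every $k \leq j_0$. If $j_0 \geq i$, then the forward strict transform of $S_1$ on $X_{j_0}$ equals the forward strict transform of $S_i$ on $X_{j_0}$, and $j_0$ is the required witness for the tail MMP. The remaining case $j_0 < i$ is handled by induction on $i$, with the tautological base $i=1$: in the inductive step one applies the inductive hypothesis to the tail MMP starting at $X_{j_0+1}$ and the lc centre $c_{X_{j_0+1}}(E)$, whose membership in $\sB_-(K_{X_{j_0+1}}+B_{j_0+1}+M_{j_0+1})$ follows by repeating the common-resolution argument between $X_{j_0+1}$ and $X_i$. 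The hard part will be the $\sB_-$ transfer step above: it rests crucially on the vanishing $\mult_E F = 0$ combined with Lemma~\ref{lem:pullbackdiminished}.
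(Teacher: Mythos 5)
Your overall skeleton is the same as the paper's: pick a divisorial valuation $E$ with $c_{X_i}(E)=S_i$ and $a(E,X_i,B_i+M_i)={-}1$, use monotonicity of discrepancies to get $a(E,X_j,B_j+M_j)={-}1$ for all $j\leq i$, set $S_1:=c_{X_1}(E)$, apply Conjecture \ref{con:mainconjecture} to $S_1$ and transport the witness index forward. The problem is precisely the step you single out as the hard part: the transfer of $\sB_-$-membership from $S_i$ to $S_1$ is asserted, not proved, and the principle you invoke is false as stated. From $S_i\subseteq\sB_-(K_{X_i}+B_i+M_i)$ and Lemma \ref{lem:pullbackdiminished} you do get the generic point of $E$ into $\sB_-\bigl(q^*(K_{X_i}+B_i+M_i)\bigr)$; but to place it into $\sB_-\bigl(p^*(K_{X_1}+B_1+M_1)\bigr)=\sB_-\bigl(q^*(K_{X_i}+B_i+M_i)+F\bigr)$ you need the inclusion $\sB_-(D)\setminus\Supp F\subseteq\sB_-(D+F)$, and the vanishing $\mult_E F=0$ does not give this. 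For a pseudoeffective $D$ and an effective $F$, adding $F$ can destroy base locus away from $\Supp F$: on the blow-up of $\mathbb{P}^2$ at a point, take $D$ the exceptional curve and $F$ the strict transform of a line through the point; then $\sB_-(D)=D$ while $D+F$ is nef, so points of $D$ off $\Supp F$ leave the diminished base locus. The only formal inclusion is $\sB_-(D+F)\subseteq\sB_-(D)\cup\Supp F$, which is the direction you do not need. The exceptionality of $F$ over $X_i$ does make the needed inclusion true in your situation, but that statement is essentially equivalent to the transfer you are trying to prove and requires positivity imported from $X_i$, not just Lemma \ref{lem:pullbackdiminished} on the common resolution.

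A correct elementary argument works directly between $X_1$ and $X_i$, in contrapositive: if $S_1\nsubseteq\sB_-(K_{X_1}+B_1+M_1)$, fix an ample $H$ on $X_i$, choose an ample $A_0$ on $X_1$ and $\varepsilon>0$ with $H-\varepsilon\varphi_*A_0$ ample (where $\varphi\colon X_1\dashrightarrow X_i$ is the composition of the steps; $\varphi_*A_0$ is $\R$-Cartier since $X_i$ is $\Q$-factorial), pick an effective $\Delta\sim_\R K_{X_1}+B_1+M_1+\varepsilon A_0$ avoiding the generic point of $S_1$, push it forward and add an effective ample representative of $H-\varepsilon\varphi_*A_0$ avoiding the generic point of $S_i$; this uses only that $\varphi$ is an isomorphism at the generic point of $S_1$, which your discrepancy computation already provides. (For comparison, the paper's own proof is shorter: it runs the discrepancy argument, defines $S_1$, and applies the conjecture, leaving the $\sB_-$-membership of $S_1$ implicit, so you are attempting to justify a point the paper glosses over.) Finally, your case $j_0<i$ is vacuous: if $\pi_{j_0}$ with $j_0<i$ were not an isomorphism at the generic point of $c_{X_{j_0}}(E)$, then $a(E,X_{j_0+1},B_{j_0+1}+M_{j_0+1})>{-}1$, contradicting what you established; so the induction on $i$ is unnecessary, and as set up it is also flawed, since in that (empty) case $c_{X_{j_0+1}}(E)$ would no longer be a log canonical centre and the inductive hypothesis could not be applied to it.
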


\begin{proof}
Fix an index $i$, and let $S_i$ be a log canonical centre of $(X_i, B_i+M_i)$ which belongs to $\sB_-(K_{X_i}+B_i+M_i)$. Let $E$ be a divisorial valuation over $X_i$ such that $c_{X_i}(E)=S_i$ and $a(E,X_i,B_i+M_i)={-}1$. By \cite[Lemma~2.8(i)]{LMT23} we have $a(E,X_j,B_j+M_j)={-}1$ for all $1\leq j\leq i$; hence by descending induction on $j$ and by \cite[Lemma~2.8(iii)]{LMT23} we conclude that each map $\pi_j^{-1}$ is an isomorphism at the generic point of the strict transform of $S_i$ on $X_j$, for $1\leq j\leq i-1$. Thus, we may define the strict transform $S_1$ of $S_i$ on $X_1$. Since we assume Conjecture~\ref{con:mainconjecture} for the given MMP, there exists an index $k\geq i$ such that $\pi_k$ is not an isomorphism at the generic point of the strict transform of $S_1$ on~$X_k$; hence $\pi_k$ is not an isomorphism at the generic point of the strict transform of $S_i$ on $X_k$, as desired.
\end{proof}

The following lemma shows that, in several crucial proofs in this paper, we are allowed to pass to an MMP which \emph{lifts} another MMP as in a lifted MMP diagram.

\begin{lem}\label{lem:liftingtheconjecture}
     Let $ (X_1,B_1+M_1) $ be a projective $\Q$-factorial NQC dlt g-pair such that $K_{X_1}+B_1+M_1$ is pseudoeffective. Consider a lifted MMP diagram 
   	\begin{center}
		\begin{tikzcd}[column sep = 0.8em, row sep = large]
			\left(X_1',B_1'+M_1'\right) \arrow[d, "f_1" swap] \arrow[rr, dashed, "\rho_1"] && \left(X_2',B_2'+M_2'\right) \arrow[d, "f_2" swap] \arrow[rr, dashed, "\rho_2"] && \left(X_3',B_3'+M_3'\right) \arrow[d, "f_3" swap] \arrow[rr, dashed, "\rho_3"] && \cdots 
			\\ 
			(X_1,B_1+M_1) \arrow[rr, dashed, "\pi_1"] && (X_2,B_2+M_2) \arrow[rr, dashed, "\pi_2"] && (X_3,B_3+M_3) \arrow[rr, dashed, "\pi_3"] && \cdots\rlap{,}
		\end{tikzcd}
	\end{center}
	where the sequence at the bottom of the diagram is a sequence of flips. Then Conjecture~\ref{con:mainconjecture} holds for the MMP at the bottom of this diagram if and only if Conjecture~\ref{con:mainconjecture} holds for the sequence at the top of the diagram.
\end{lem}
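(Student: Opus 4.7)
The plan is to translate both sides of Conjecture \ref{con:mainconjecture}---membership of an lc centre $S$ in $\sB_-(K_{X_1}+B_1+M_1)$, and the failure of some MMP step to be an isomorphism at the generic point of the strict transform of $S$---between the top and bottom of the lifting diagram via the dlt blowups $f_i$. The key inputs are: (i) since each $f_i$ is a dlt blowup, $K_{X_i'}+B_i'+M_i' \sim_\R f_i^*(K_{X_i}+B_i+M_i)$, and hence $\sB_-(K_{X_i'}+B_i'+M_i') = \sB_-(f_i^*(K_{X_i}+B_i+M_i))$; (ii) a divisorial valuation $E$ over $X_1$ has the same discrepancy with respect to $(X_1,B_1+M_1)$ and $(X_1',B_1'+M_1')$ by crepantness, so $c_{X_1'}(E)$ is an lc centre of $(X_1',B_1'+M_1')$ if and only if $c_{X_1}(E) = f_1(c_{X_1'}(E))$ is an lc centre of $(X_1,B_1+M_1)$; (iii) Lemma \ref{lem:pullbackdiminished} relates the diminished base loci on $X_i$ and $X_i'$; and (iv) the commutativity $\pi_i \circ f_i = f_{i+1} \circ \rho_i$ in the lifting diagram propagates the property ``not an isomorphism at a generic point of a strict transform'' between the two rows.

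For the direction \emph{bottom $\Rightarrow$ top}, given an lc centre $S'$ of $(X_1',B_1'+M_1')$ lying in $\sB_-(K_{X_1'}+B_1'+M_1')$, set $S := f_1(S')$. Writing $S' = c_{X_1'}(E)$ for a suitable valuation $E$ with $a(E,X_1',B_1'+M_1') = -1$, we get that $S = c_{X_1}(E)$ is an lc centre of $(X_1,B_1+M_1)$. The inclusion $\sB_-(f_1^*D) \subseteq f_1^{-1}\sB_-(D)$ from Lemma \ref{lem:pullbackdiminished} (with $D := K_{X_1}+B_1+M_1$) gives $S \subseteq \sB_-(D)$. Applying the bottom form of the conjecture produces an index $k$ such that $\pi_k$ fails to be an isomorphism at the generic point of the strict transform $S_k$. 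Since $\rho_k$ is an MMP over $Z_k$ lifting the flip $\pi_k$, the commutativity forces some elementary step inside $\rho_k$ to fail at the generic point of the corresponding strict transform of $S'$; otherwise $\rho_k$ would be an isomorphism at the generic point of $S_k'$, and commutativity would make $\pi_k$ an isomorphism at the generic point of $S_k$, a contradiction.

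For the direction \emph{top $\Rightarrow$ bottom}, given an lc centre $S$ of $(X_1,B_1+M_1)$ with $S \subseteq \sB_-(K_{X_1}+B_1+M_1)$, pick a divisorial valuation $E$ realising $S$ with $a(E,X_1,B_1+M_1)=-1$ and set $S' := c_{X_1'}(E)$, which is an irreducible lc centre of $(X_1',B_1'+M_1')$ satisfying $f_1(S') = S$. The inclusion $f_1^{-1}\sB_-(D) \subseteq \sB_-(f_1^*D) \cup f_1^{-1}(X_{1,\mathrm{sing}})$ from Lemma \ref{lem:pullbackdiminished}, combined with the irreducibility of $S'$ and the fact that a complex irreducible variety is not a countable union of proper closed subsets, forces either $S' \subseteq \sB_-(f_1^*D) = \sB_-(K_{X_1'}+B_1'+M_1')$ or $S' \subseteq f_1^{-1}(X_{1,\mathrm{sing}})$; the latter degenerate case, in which $S$ lies entirely in the singular locus of $X_1$, is handled by a separate argument exploiting the $\Q$-factoriality of $X_1$ and the flexibility in the choice of $E$ so that $c_{X_1'}(E)$ avoids $f_1^{-1}(X_{1,\mathrm{sing}})$. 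In the principal case, applying the top form of the conjecture to $S'$ provides an elementary step in the top MMP failing to be an isomorphism at the generic point of the relevant strict transform of $S'$, and commutativity of the lifting diagram translates this into failure of the corresponding $\pi_{i(j)}$ on the bottom at the generic point of the strict transform of $S$.

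The principal obstacle is the bookkeeping of strict transforms across the lifting diagram: a single flip $\pi_k$ on the bottom is lifted by a possibly long MMP $\rho_k$ on top, and one must match ``not an isomorphism at a generic point'' on one side with ``some elementary step is not an isomorphism at the corresponding generic point'' on the other, tracking both $S$ and its chosen lift $S'$ through every intermediate variety and using at each stage that $f_i$ restricts to a dominant map from the strict transform of $S'$ onto the strict transform of $S$. A secondary subtlety is the $S \subseteq X_{1,\mathrm{sing}}$ case, where the pullback comparison for $\sB_-$ ceases to be tight and a direct geometric argument is required.
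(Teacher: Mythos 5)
Your overall skeleton matches the paper's: push the lc centre down/up via $f_1$, transfer the $\sB_-$ condition with Lemma \ref{lem:pullbackdiminished}, and use crepancy of the dlt blowups. But there is a genuine gap at the crucial transfer step, in both directions: you justify the passage from ``$\pi_k$ is not an isomorphism at the generic point of $S_k$'' to ``some step of $\rho_k$ is not an isomorphism at the generic point of the strict transform of $S'$'' (and conversely) purely by ``commutativity of the lifting diagram''. Commutativity alone does not give this. In the bottom-to-top direction your contrapositive claim --- that if $\rho_k$ is an isomorphism at the generic point of $S'_k$ then $\pi_k$ is an isomorphism at the generic point of $S_k$ --- is not a formal consequence of $\pi_k\circ f_k=f_{k+1}\circ\rho_k$: the dlt blowup $f_k$ may contract $S'_k$ (e.g.\ a divisor) onto a small centre $S_k$ sitting inside the flipping locus, and nothing in the commutativity of rational maps prevents $\rho_k$ from being untouched at the generic point of $S'_k$ in that situation. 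What rules this out is a discrepancy argument, and this is exactly how the paper argues: choose a divisorial valuation $E$ with $a(E,X_1,B_1+M_1)=-1$ computing the centre; since each $f_i$ is crepant, $a(E,X_i,B_i+M_i)=a(E,X'_i,B'_i+M'_i)$ for all $i$, and by \cite[Lemma 2.8(iii)]{LMT23} a step of either MMP fails to be an isomorphism at the generic point of the centre of $E$ exactly when the discrepancy of $E$ strictly increases. So ``failure at some step'' is read off from the discrepancy sequence, which is literally the same on the two rows. You do invoke crepancy, but only to identify lc centres, not to make this transfer; as written, the key implication is asserted, not proved.

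A secondary issue is your treatment of the case $S\subseteq X_{1,\mathrm{sing}}$ in the top-to-bottom direction. The fix you propose (choosing $E$ so that $c_{X'_1}(E)$ avoids $f_1^{-1}(X_{1,\mathrm{sing}})$) cannot work: if $S\subseteq X_{1,\mathrm{sing}}$, then \emph{every} valuation with centre $S$ on $X_1$ has its centre on $X'_1$ inside $f_1^{-1}(X_{1,\mathrm{sing}})$. The correct observation, used in the paper, is that this case is vacuous: since $(X_1,B_1+M_1)$ is dlt, every log canonical centre meets the log smooth open subset of Definition \ref{dfn:singularities}, hence $S\nsubseteq X_{1,\mathrm{sing}}$, and then the equality in Lemma \ref{lem:pullbackdiminished} together with the countable-union/irreducibility argument (which you correctly identify) yields $S'\subseteq\sB_-(K_{X'_1}+B'_1+M'_1)$.
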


\begin{proof}
\setlength{\parindent}{0pt}
\begin{enumerate}[wide, label={{\it Step}~\rm\arabic*.}, ref=\arabic*]
\item\label{step1}
  To show one direction, assume that Conjecture~\ref{con:mainconjecture} holds for the MMP at the bottom of the diagram above. Let $S'$ be a log canonical centre of $(X_1', B_1'+M_1')$ which belongs to $\sB_-(K_{X_1'}+B_1'+M_1')$, and set $S:=f_1(S')$. Since
$$\sB_-\left(K_{X_1'}+B_1'+M_1'\right)\subseteq f_1^{-1}\sB_-\left(K_{X_1}+B_1+M_1\right)$$
  by Lemma~\ref{lem:pullbackdiminished}, we conclude that $S$ is a log canonical centre of $(X_1, B_1+M_1)$ which belongs to the set   $\sB_-(K_{X_1}+B_1+M_1)$. Let $E$ be a divisorial valuation over $X_1'$ such that $c_{X_1'}(E)=S'$ and $a(E,X_1',B_1'+M_1')={-}1$; hence $c_{X_1}(E)=S$, and $a(E,X_1,B_1+M_1)={-}1$ as the map $f_1$ is a dlt blowup. Then since we assume Conjecture~\ref{con:mainconjecture} for the $ (K_{X_1} + B_1+M_1) $-MMP at the bottom of the diagram above, there exists an index $i$ such that $\pi_i$ is not an isomorphism at the generic point of the strict transform of $S$ on $X_i$. This implies $a(E,X_i,B_i+M_i)>{-}1$ by  \cite[Lemma 2.8(iii)]{LMT23}; hence $a(E,X_i',B_i'+M_i')>{-}1$ as the map $f_i$ is a dlt blowup. But then again by  \cite[Lemma 2.8(iii)]{LMT23} applied to the $ (K_{X_1'} + B_1'+M_1') $-MMP at the top of the diagram above, we conclude that there exists a step of that MMP which is not an isomorphism at the generic point of the strict transform of $S'$, as desired.
\end{enumerate}
\setlength{\parindent}{\savedparindent}
\begin{enumerate}[resume, wide, label={{\it Step}~\rm\arabic*.}, ref=\arabic*]
\item\label{step2}
To show the converse, assume that Conjecture~\ref{con:mainconjecture} holds for the MMP at the top of the diagram above. Let $S$ be a log canonical centre of $(X_1,B_1+M_1)$ which belongs to $\sB_-(K_{X_1}+B_1+M_1)$. Assume, towards a contradiction, that each map $\pi_i$ is an isomorphism at the generic point of the strict transform of~$S$ on $X_i$. Let $E$ be a divisorial valuation over $X_1$ such that $c_{X_1}(E)=S$ and $a(E,X_1,B_1+M_1)={-}1$; hence $a(E,X_1',B_1'+M_1')={-}1$ as the map $f_1$ is a dlt blowup. Set $S':=c_{X_1'}(E)$. Since the g-pair $(X_1,B_1+M_1)$ is dlt,  $S$ does not belong to the singular locus $Z_1$ of $X_1$ by Definition~\ref{dfn:singularities}; hence $S'\nsubseteq f_1^{-1}(Z_1)$. As $f_1$ is a dlt blowup, by Lemma~\ref{lem:pullbackdiminished} we have
   $$f^{-1}_1\sB_-\left(K_{X_1}+B_1+M_1\right)\cup f_1^{-1}(Z_1)=\sB_-\left(K_{X_1'}+B_1'+M_1'\right)\cup f_1^{-1}(Z_1),$$
hence $S'\subseteq \sB_-(K_{X_1'}+B_1'+M_1')$. Then, since we assume Conjecture~\ref{con:mainconjecture} for the $ (K_{X_1'} + B_1'+M_1') $-MMP at the top of the diagram above, there exists an index $i$ such that $\rho_i$ is not an isomorphism at the generic point of the strict transform of $S'$ on $X_i'$. This implies $a(E,X_i',B_i'+M_i')>{-}1$ by  \cite[Lemma 2.8(iii)]{LMT23}, hence $a(E,X_i,B_i+M_i)>{-}1$ as the map $f_i$ is a dlt blowup. But then again by  \cite[Lemma 2.8(iii)]{LMT23} applied to the $ (K_{X_1} + B_1+M_1) $-MMP at the bottom of the diagram above, we conclude that there exists a step of that MMP which is not an isomorphism at the generic point of the strict transform of $S$, giving a contradiction which finishes the proof.\hfill\qedhere
\end{enumerate}
\renewcommand{\qed}{}
\end{proof}

Now we come to the important point mentioned in the introduction: that even though this is far from obvious, the statement of Conjecture~\ref{con:mainconjecture} is actually about the behaviour of the Nakayama--Zariski decomposition. In fact, we show in Proposition~\ref{pro:equivalence} below that it is equivalent to the following conjecture when we consider -- as we may -- MMPs consisting only of flips.

\begin{conj}\label{con:mainconjecture2}
     Let $ (X_1, B_1+M_1) $ be a projective $\Q$-factorial NQC dlt g-pair such that $K_{X_1}+B_1+M_1$ is pseudoeffective. Consider a sequence of flips in a $(K_{X_1}+{B_1}+{M_1})$-MMP
\begin{center}
\begin{tikzcd}[column sep = 0.8em, row sep = 1.75em]
(X_1,B_1+M_1) \arrow[rr, dashed, "\pi_1"] && (X_2,B_2+M_2) \arrow[rr, dashed, "\pi_2"] && (X_3,B_3+M_3) \arrow[rr, dashed, "\pi_3"] && \cdots ,
\end{tikzcd}
\end{center}
and consider a lifted MMP diagram 
	\begin{center}
		\begin{tikzcd}[column sep = 0.8em, row sep = large]
			\left(X_1',B_1'+M_1'\right) \arrow[d, "f_1" swap] \arrow[rr, dashed, "\rho_1"] && \left(X_2',B_2'+M_2'\right) \arrow[d, "f_2" swap] \arrow[rr, dashed, "\rho_2"] && \left(X_3',B_3'+M_3'\right) \arrow[d, "f_3" swap] \arrow[rr, dashed, "\rho_3"] && \cdots 
			\\ 
			(X_1,B_1+M_1) \arrow[rr, dashed, "\pi_1"] && (X_2,B_2+M_2) \arrow[rr, dashed, "\pi_2"] && (X_3,B_3+M_3) \arrow[rr, dashed, "\pi_3"] && \cdots\rlap{.}
		\end{tikzcd}
	\end{center}
Let $T$ be a component of\, $N_\sigma(K_{X'_1}+B'_1+M'_1)$ which is a log canonical centre of\, $(X'_1,B'_1+M'_1)$. Then the MMP at the top of the above diagram contracts $T$.
\end{conj}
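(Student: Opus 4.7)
Since this statement is advanced as a conjecture and no complete proof is to be expected in the paper, I outline an attack strategy rather than a finished argument. The plan would be to proceed by contradiction: assume that the strict transform $T_i$ of $T$ on $X_i'$ is never contracted by the MMP on top of the diagram. Let $E$ be a divisorial valuation over $X_1'$ with $c_{X_1'}(E) = T$ and $a(E, X_1', B_1'+M_1') = -1$. If $T_i$ persists as a divisor for every $i$, then $c_{X_i'}(E) = T_i$ and $a(E, X_i', B_i'+M_i') = -1$ for all $i$ by \cite[Lemma 2.8(i)]{LMT23}, and each $\rho_i$ restricts to a birational map $\rho_i|_{T_i}\colon T_i \dashrightarrow T_{i+1}$ which is an isomorphism in codimension one.

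The key observation is that by Lemma \ref{lem:resNQC}(b), the strict transforms $T_i$ remain components of $N_\sigma(K_{X_i'}+B_i'+M_i')$ throughout the MMP as long as they are not contracted. The strategy would be to leverage this persistent negativity together with the adjunction structure on the log canonical centres. I would set $K_{T_i}+B_{T_i}+M_{T_i} := (K_{X_i'}+B_i'+M_i')|_{T_i}$ to obtain dlt g-pairs of lower dimension. Since $T_i \subseteq \Supp N_\sigma(K_{X_i'}+B_i'+M_i')$, Lemma \ref{lem:restriction2} does not apply; indeed one expects the restriction to fail to be pseudoeffective, and by Lemma \ref{lem:valuations} the boundaries $B_{T_i}$ are eventually preserved by $\rho_i|_{T_i}$. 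From this eventually stable restriction one would hope to extract a sequence of non-trivial negative operations on the $T_i$ and derive a contradiction either from termination of flips in lower dimension (applied inductively) or from ACC-type results such as \cite[Lemma 2.16]{LMT23}.

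The main obstacle is that the passage from the divisorial membership $T_i \subseteq \Supp N_\sigma(K_{X_i'}+B_i'+M_i')$ to a concrete dynamical statement on $T_i$ is subtle: the restriction $(K_{X_i'}+B_i'+M_i')|_{T_i}$ does not obviously admit a clean interpretation, the induced maps $\rho_i|_{T_i}$ need not constitute an MMP for any g-pair on $T_i$, and detecting whether a given flipping locus meets $T_i$ non-trivially requires fine control over the flipping contraction. Given the stated equivalence with Conjecture \ref{con:mainconjecture} and the fact that both conjectures are advanced as the central open inputs of the paper, I would expect any successful attack to require genuinely new ideas — perhaps a refined monotonicity property of the Nakayama--Zariski decomposition along flips, or a reduction via the minimal model theory of generalised pairs developed in \cite{LT22a,TX23} to cases where a minimal model exists and Lemma \ref{lem:resNQC}(b) can be invoked to contract $T$ directly.
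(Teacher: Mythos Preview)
You have correctly identified that this statement is posed as a conjecture in the paper and is not proved there; the paper's contribution regarding Conjecture~\ref{con:mainconjecture2} is Proposition~\ref{pro:equivalence}, which establishes its equivalence with Conjecture~\ref{con:mainconjecture} for sequences of flips, together with Corollary~\ref{cor:B-termination}, which shows both conjectures follow from termination of flips. There is therefore no proof in the paper to compare your outline against.

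Your speculative outline is a reasonable summary of the natural first moves and of the obstacles. A couple of minor remarks on the details: the assertion that each $\rho_i|_{T_i}$ is an isomorphism in codimension one does not follow automatically from $T$ not being contracted, since each $\rho_i$ is itself a composite of flips and divisorial contractions over $Z_i$ and may well modify $T_i$ in codimension one; one would need an argument along the lines of Step~2 of the proof of Theorem~\ref{thm:specterm_g-pairs1} to obtain this after relabelling. Also, the persistence of $T_i$ as a component of $\lfloor B_i'\rfloor$ follows simply from the fact that the boundary is pushed forward along the MMP, rather than from \cite[Lemma~2.8(i)]{LMT23}. Your identification of the core difficulty---that membership of $T_i$ in $\Supp N_\sigma$ does not by itself force the flipping locus to meet $T_i$---is exactly the point, and is why the paper treats this as a genuine conjecture rather than a lemma.
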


\begin{prop}\label{pro:equivalence}
    Let $ (X_1,B_1+M_1) $ be a projective $\Q$-factorial NQC dlt g-pair such that $K_{X_1}+B_1+M_1$ is pseudoeffective. Consider a sequence of flips in a $(K_{X_1}+{B_1}+{M_1})$-MMP 
\begin{center}
\begin{tikzcd}[column sep = 0.8em, row sep = 1.75em]
(X_1,B_1+M_1) \arrow[rr, dashed, "\pi_1"] && (X_2,B_2+M_2) \arrow[rr, dashed, "\pi_2"] && (X_3,B_3+M_3) \arrow[rr, dashed, "\pi_3"] && \cdots .
\end{tikzcd}
\end{center}
   Then Conjecture~\ref{con:mainconjecture} holds for this sequence if and only if Conjecture~\ref{con:mainconjecture2} holds for any lifted MMP diagram
   	\begin{center}
		\begin{tikzcd}[column sep = 0.8em, row sep = large]
			\left(X_1',B_1'+M_1'\right) \arrow[d, "f_1" swap] \arrow[rr, dashed, "\rho_1"] && \left(X_2',B_2'+M_2'\right) \arrow[d, "f_2" swap] \arrow[rr, dashed, "\rho_2"] && \left(X_3',B_3'+M_3'\right) \arrow[d, "f_3" swap] \arrow[rr, dashed, "\rho_3"] && \cdots 
			\\ 
			(X_1,B_1+M_1) \arrow[rr, dashed, "\pi_1"] && (X_2,B_2+M_2) \arrow[rr, dashed, "\pi_2"] && (X_3,B_3+M_3) \arrow[rr, dashed, "\pi_3"] && \cdots\rlap{.}
		\end{tikzcd}
	\end{center}
\end{prop}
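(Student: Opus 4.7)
My approach is to invoke Lemma \ref{lem:liftingtheconjecture}, which says that Conjecture \ref{con:mainconjecture} for the bottom MMP is equivalent to Conjecture \ref{con:mainconjecture} for the top MMP in any fixed lifting diagram as in Lemma \ref{lem:lifting_g}. This reduces the proposition to proving, for any such fixed diagram, the equivalence of Conjecture \ref{con:mainconjecture} and Conjecture \ref{con:mainconjecture2} for the top MMP.

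For the direction ``Conjecture \ref{con:mainconjecture} $\Rightarrow$ Conjecture \ref{con:mainconjecture2}'', I would take a component $T$ of $N_\sigma(K_{X_1'}+B_1'+M_1')$ which is a log canonical centre of $(X_1',B_1'+M_1')$. By Lemma \ref{lem:nakayamazariskidiminished}, $T\subseteq \sB_-(K_{X_1'}+B_1'+M_1')$, so Conjecture \ref{con:mainconjecture} yields a step $\rho_i$ which is not an isomorphism at the generic point of the strict transform of $T$. Since $T$ is a prime divisor and flips are isomorphisms in codimension one, $\rho_i$ must be a divisorial contraction contracting the strict transform of $T$, which is exactly what Conjecture \ref{con:mainconjecture2} requires.

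For the converse, starting from a log canonical centre $S$ of $(X_1,B_1+M_1)$ contained in $\sB_-(K_{X_1}+B_1+M_1)$, I pick a divisorial valuation $E$ over $X_1$ with $c_{X_1}(E)=S$ and $a(E,X_1,B_1+M_1)=-1$, and use Lemma \ref{lem:dltblowup} to build a dlt blowup $f_1\colon X_1'\to X_1$ on which $E$ appears as a prime divisor $T$. Lifting the bottom MMP through $f_1$ via Lemma \ref{lem:lifting_g} produces a diagram as in the proposition. Once $T$ is verified to be a component of $N_\sigma(K_{X_1'}+B_1'+M_1')$, Conjecture \ref{con:mainconjecture2} applied to this diagram provides a step $\rho_i$ contracting the strict transform of $T$. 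Since each $f_j$ is a dlt blowup, $a(E,X_j,B_j+M_j)=a(E,X_j',B_j'+M_j')$ for every $j$, so $a(E,X_{i+1},B_{i+1}+M_{i+1})>-1$ while $a(E,X_1,B_1+M_1)=-1$; by monotonicity of discrepancies along an MMP combined with the fact that strict increase at a step is equivalent to that step not being an isomorphism at the generic point of the relevant centre (\cite[Lemma 2.8]{LMT23}), there exists a first index $k\leq i$ at which $\pi_k$ fails to be an isomorphism at the generic point of the strict transform of $S$ on $X_k$.

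The main obstacle is verifying that $T\subseteq \sB_-(K_{X_1'}+B_1'+M_1')$. From $T\subseteq f_1^{-1}(S)\subseteq f_1^{-1}\sB_-(K_{X_1}+B_1+M_1)$ and Lemma \ref{lem:pullbackdiminished}, one only obtains the inclusion $T\subseteq \sB_-(K_{X_1'}+B_1'+M_1')\cup f_1^{-1}(X_{1,\mathrm{sing}})$. However, log canonical centres of a dlt g-pair meet the log smooth open subset, so $S\nsubseteq X_{1,\mathrm{sing}}$, and consequently the generic point $\eta_T$ of $T$ lies outside $f_1^{-1}(X_{1,\mathrm{sing}})$ and thus in $\sB_-(K_{X_1'}+B_1'+M_1')$. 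Since $\sB_-$ is an increasing union of the closed sets $\sB(\,\cdot\,+\varepsilon A)$ as $\varepsilon\to 0^{+}$, having $\eta_T$ in $\sB_-$ forces the entire prime divisor $T$ into some $\sB(K_{X_1'}+B_1'+M_1'+\varepsilon_0 A)$, and hence into $\sB_-(K_{X_1'}+B_1'+M_1')$; then Lemma \ref{lem:nakayamazariskidiminished} yields that $T$ is a component of $N_\sigma(K_{X_1'}+B_1'+M_1')$, closing the argument.
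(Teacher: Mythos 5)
Your proposal is correct and follows essentially the same route as the paper: the forward direction via Lemma \ref{lem:liftingtheconjecture} together with Lemma \ref{lem:nakayamazariskidiminished}, and the converse by extracting a divisorial valuation computing $S$, realising it on a dlt blowup via Lemma \ref{lem:dltblowup}, lifting the MMP with Lemma \ref{lem:lifting_g}, placing $T$ in $N_\sigma(K_{X_1'}+B_1'+M_1')$ through Lemma \ref{lem:pullbackdiminished} and the log smooth locus, and concluding with the discrepancy argument from \cite[Lemma 2.8]{LMT23}. The only differences are presentational (you argue the converse directly rather than by contradiction, and you spell out the $\sB_-$ versus singular-locus point slightly more explicitly), so there is nothing substantive to add.
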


\begin{proof}
   If Conjecture~\ref{con:mainconjecture} holds for the given MMP, then Conjecture~\ref{con:mainconjecture2} holds for any  diagram as above by Lemmas~\ref{lem:liftingtheconjecture} and~\ref{lem:nakayamazariskidiminished}.

   To show the converse, let $S$ be a log canonical centre of $(X_1,B_1+M_1)$ which belongs to $\sB_-(K_{X_1}+B_1+M_1)$, and assume, towards a  contradiction, that each map $\pi_i$ is an isomorphism at the generic point of the strict transform of $S$ on $X_i$. Since $S$ is a log canonical centre of $ (X_1,B_1+M_1) $, there exists a divisorial valuation~$T$ over $X_1$ such that $c_{X_1}(T)=S$ and $a(T,X_1,B_1+M_1)={-}1$. By Lemma~\ref{lem:dltblowup} there exists a $\Q$-factorial dlt model $f_1\colon (X_1',B_1' + M_1')\to (X_1,B_1+M_1)$ such that $T$ is a divisor on $X_1'$ which is a component of $\lfloor B_1'\rfloor$. Then there exists a lifted MMP diagram
      	\begin{center}
		\begin{tikzcd}[column sep = 0.8em, row sep = large]
			\left(X_1',B_1'+M_1'\right) \arrow[d, "f_1" swap] \arrow[rr, dashed, "\rho_1"] && \left(X_2',B_2'+M_2'\right) \arrow[d, "f_2" swap] \arrow[rr, dashed, "\rho_2"] && \left(X_3',B_3'+M_3'\right) \arrow[d, "f_3" swap] \arrow[rr, dashed, "\rho_3"] && \cdots 
			\\ 
			(X_1,B_1+M_1) \arrow[rr, dashed, "\pi_1"] && (X_2,B_2+M_2) \arrow[rr, dashed, "\pi_2"] && (X_3,B_3+M_3) \arrow[rr, dashed, "\pi_3"] && \cdots\rlap{.}
		\end{tikzcd}
	\end{center}
   
   Since $S$ is a log canonical centre of the dlt g-pair $ (X_1,B_1+M_1) $, we have that $S$ does not belong to the singular locus $Z_1$ of $X_1$ by Definition~\ref{dfn:singularities}; hence $T\nsubseteq f_1^{-1}(Z_1)$. Since $f_1$ is a dlt blowup, by Lemma~\ref{lem:pullbackdiminished} we have
   $$f^{-1}_1\sB_-(K_{X_1}+B_1+M_1)\cup f_1^{-1}(Z_1)=\sB_-(K_{X_1'}+B_1'+M_1')\cup f_1^{-1}(Z_1),$$
   hence $T\subseteq \sB_-(K_{X_1'}+B_1'+M_1')$. Therefore, $T\subseteq\Supp N_\sigma(K_{X_1'}+B_1'+M_1')$ by Lemma~\ref{lem:nakayamazariskidiminished}. Then, since we assume Conjecture~\ref{con:mainconjecture2}, there exists an index $i$ such that $\rho_i$ is not an isomorphism at the generic point of the strict transform of $T$ on $X_i'$, and we conclude as at the end of Step~\ref{step2} of the proof of Lemma~\ref{lem:liftingtheconjecture}.
\end{proof}

As a corollary we show another fact mentioned in the introduction, that Conjecture~\ref{con:mainconjecture} is a consequence of the termination of flips conjecture.

\begin{cor}\label{cor:B-termination}
Let $ (X_1, B_1+M_1) $ be a projective $\Q$-factorial NQC dlt g-pair such that $K_{X_1}+B_1+M_1$ is pseudoeffective. Consider a $(K_{X_1}+{B_1}+{M_1})$-MMP 
\begin{center}
\begin{tikzcd}[column sep = 0.8em, row sep = 1.75em]
(X_1,B_1+M_1) \arrow[rr, dashed, "\pi_1"] && (X_2,B_2+M_2) \arrow[rr, dashed, "\pi_2"] && (X_3,B_3+M_3) \arrow[rr, dashed, "\pi_3"] && \cdots 
\end{tikzcd}
\end{center}
which terminates. Let $S$ be a log canonical centre of\, $(X_1,B_1+M_1)$ which belongs to $\sB_-(K_{X_1}+B_1+M_1)$. Then there exists an index $i$ such that $\pi_i$ is not an isomorphism at the generic point of the strict transform of $S$ on $X_i$.
\end{cor}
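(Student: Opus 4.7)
The plan is to argue by contradiction: suppose that $\pi_i$ is an isomorphism at the generic point of the strict transform $S_i$ of $S$ on $X_i$ for every $i\geq 1$. Since the MMP terminates and $K_{X_1}+B_1+M_1$ is pseudoeffective, the final model $(X_N,B_N+M_N)$ is a minimal model, so $K_{X_N}+B_N+M_N$ is nef.

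The heart of the proof is the inductive claim
\[ S_i\subseteq\sB_-(K_{X_i}+B_i+M_i) \quad\text{for every } i\geq 1, \]
the base case of which is the hypothesis. For the inductive step, I would choose a smooth common resolution $p\colon W\to X_i$, $q\colon W\to X_{i+1}$ of the map $\pi_i$ (when $\pi_i$ is a divisorial contraction one simply takes $W=X_i$). The Negativity lemma applied to the MMP step yields
\[ p^*(K_{X_i}+B_i+M_i)=q^*(K_{X_{i+1}}+B_{i+1}+M_{i+1})+E, \]
where $E\geq 0$ is supported either on the preimages of the flipping and flipped loci (for a flip) or on the contracted divisor (for a divisorial contraction). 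Let $\widetilde S\subseteq W$ be the strict transform of $S_i$. Since $S_i$ is a log canonical centre of the $\Q$-factorial dlt g-pair $(X_i,B_i+M_i)$ (both properties being preserved under each MMP step), Definition \ref{dfn:singularities}(c) gives $S_i\not\subseteq X_{i,\mathrm{sing}}$, so $\widetilde S\not\subseteq p^{-1}(X_{i,\mathrm{sing}})$. Combined with the inductive hypothesis and Lemma \ref{lem:pullbackdiminished}, a Baire category argument exploiting the irreducibility of $\widetilde S$ and the countable-union form of $\sB_-$ yields $\widetilde S\subseteq\sB_-(p^*(K_{X_i}+B_i+M_i))$. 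The elementary inclusion $\sB_-(A+F)\subseteq\sB_-(A)\cup\Supp F$ for pseudoeffective $A$ and effective $F$, which is immediate by adding $F$ to any element of $|A+\varepsilon H|_\R$, shows that
\[ \widetilde S\subseteq\sB_-(q^*(K_{X_{i+1}}+B_{i+1}+M_{i+1}))\cup\Supp E. \]
The hypothesis that $\pi_i$ is an isomorphism at the generic point of $S_i$ forces the generic point of $\widetilde S$ to avoid $\Supp E$, so irreducibility gives $\widetilde S\subseteq\sB_-(q^*(K_{X_{i+1}}+B_{i+1}+M_{i+1}))$. The second statement of Lemma \ref{lem:pullbackdiminished} then yields $S_{i+1}=q(\widetilde S)\subseteq\sB_-(K_{X_{i+1}}+B_{i+1}+M_{i+1})$, which finishes the induction; note that $S_{i+1}$ remains a log canonical centre of $(X_{i+1},B_{i+1}+M_{i+1})$ since the valuation realising $S_i$ continues to have discrepancy $-1$ on $X_{i+1}$ by the iso hypothesis.

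Specialising the claim to $i=N$ gives $S_N\subseteq\sB_-(K_{X_N}+B_N+M_N)$. However, since $K_{X_N}+B_N+M_N$ is nef, the sum $K_{X_N}+B_N+M_N+A$ is ample for every ample $\R$-divisor $A$, and hence has empty stable base locus; therefore $\sB_-(K_{X_N}+B_N+M_N)=\emptyset$, which contradicts the non-emptiness of the strict transform $S_N$. The main obstacle will be the verification in the inductive step that $\widetilde S\not\subseteq\Supp E$, which uses essentially the hypothesis that $\pi_i$ is an isomorphism at the generic point of $S_i$; a closely related delicate point is the preservation of the $\Q$-factorial dlt structure under each MMP step, which is what allows Definition \ref{dfn:singularities}(c) to ensure that log canonical centres always meet the log smooth locus.
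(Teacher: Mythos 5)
Your argument is correct, but it takes a genuinely different route from the paper. The paper deduces the corollary from the machinery of Proposition \ref{pro:equivalence}: it extracts a divisorial log canonical place $T$ over $S$ via a dlt blowup (Lemma \ref{lem:dltblowup}), lifts the whole MMP to dlt blowups (Lemma \ref{lem:lifting_g}), shows $T\subseteq\Supp N_\sigma(K_{X_1'}+B_1'+M_1')$ using Lemmas \ref{lem:pullbackdiminished} and \ref{lem:nakayamazariskidiminished}, and then gets a contradiction because the discrepancy of $T$ stays $-1$ along the lifted MMP (so $T$ is never contracted) while $N_\sigma$ must push forward to zero on the minimal model by Lemma \ref{lem:resNQC}(b). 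You instead track $S$ directly on the models $X_i$, proving inductively that its strict transform stays inside $\sB_-(K_{X_i}+B_i+M_i)$ via the negativity lemma on a common resolution, the elementary inclusion $\sB_-(A+F)\subseteq\sB_-(A)\cup\Supp F$, Lemma \ref{lem:pullbackdiminished}, and the fact that an irreducible variety over $\C$ is not a countable union of proper closed subsets; the contradiction then comes from $\sB_-=\emptyset$ on the final nef model. Your route is more self-contained (no dlt blowups, no lifting lemma, no $N_\sigma$) and treats divisorial contractions on the same footing as flips, whereas the paper's route reuses its central machinery and makes the link to the Nakayama--Zariski decomposition explicit. Two small points to tighten: the \enquote{strict transform} $\widetilde S$ should be taken as an irreducible component of $p^{-1}(S_i)$ dominating $S_i$ (the usual strict transform need not be defined if $W$ is obtained by blowing up along $S_i$), and you should note explicitly that $c_{X_{i+1}}$ of the valuation realising $S_i$ equals the strict transform $S_{i+1}$, and that $X_{i+1}$ stays $\Q$-factorial dlt with $K_{X_{i+1}}+B_{i+1}+M_{i+1}$ pseudoeffective, so the induction can proceed; both are standard and do not affect correctness.
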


\begin{proof}
We use the same notation as in the second and  third paragraphs of the proof of Proposition~\ref{pro:equivalence}. Assume that each map $\pi_i$ is an isomorphism at the generic point of the strict transform of $S$ on $X_i$, and consider the component $T\subseteq\Supp N_\sigma(K_{X_1'}+B_1'+M_1')$ constructed in that proof. Then $a(E,X_i,B_i+M_i)={-}1$ for each $i$ by  \cite[Lemma 2.8(iii)]{LMT23}; hence $a(E,X_i',B_i'+M_i')={-}1$ for each $i$ as all maps $f_i$ are dlt blowups. But then again by  \cite[Lemma 2.8(iii)]{LMT23} applied to the $ (K_{X_1'} + B_1'+M_1') $-MMP in the proof of Proposition~\ref{pro:equivalence}, we conclude that $T$ is not contracted in that MMP, which contradicts Lemma~\ref{lem:resNQC}\eqref{lem:resNQC-b}.
\end{proof}

\section{Balanced MMP}

In this section we prove Proposition~\ref{pro:balanced}. We first need the following lemma on the behaviour of log canonical thresholds under maps in an MMP; \textit{cf.} \cite[Lemma 1.21]{HM20}.

\begin{lem}\label{lem:ascending}
Let $(X,B+M)$ be a projective $\Q$-factorial NQC log canonical g-pair with a flip or a divisorial contraction
\begin{center}
\begin{tikzcd}[column sep = 0.8em, row sep = 1.75em]
  \pi\colon (X,B+M)\arrow[rr, dashed] &&(X',B'+M'),
\end{tikzcd}
\end{center}
and set $P:=P_\sigma(K_X+B+M)$, $N:=N_\sigma(K_X+B+M)$, $P':=P_\sigma(K_{X'}+B'+M')$, and $N':=N_\sigma(K_{X'}+B'+M')$. If\, $(X,B+M)$ has a minimal model, then for each positive real number $t$, if the g-pair $(X,(B+tN)+(M+tP))$ is log canonical, then the g-pair $(X',(B'+tN')+(M'+tP'))$ is log canonical.
\end{lem}

\begin{proof}
Let $(p,q)\colon W\to X\times X'$ be a smooth resolution of indeterminacies of the map $\pi$. Then by the negativity lemma there exists an effective $q$-exceptional $\R$-divisor $G$ such that
$$p^*(K_X+B+M)\sim_\R q^*(K_{X'}+B'+M')+G,$$
so, by \cite[Lemma 2.16]{GL13} or by \cite[Lemma 2.4]{LP20a}, we have
\begin{equation}\label{eq:89}
N_\sigma\left(p^*(K_X+B+M)\right)= N_\sigma\left(q^*(K_{X'}+B'+M')\right)+G
\end{equation}
and
$$P_\sigma\left(p^*(K_X+B+M)\right)\sim_\R P_\sigma\left(q^*(K_{X'}+B'+M')\right).$$
By passing to a higher resolution, by Lemma~\ref{lem:resNQC}\eqref{lem:resNQC-a} and by \cite[Corollary III.5.17]{Nak04}, we may assume that $P_\sigma(p^*(K_X+B+M))$ is NQC. By Remark~\ref{rem:easycomputation2}, for each real number $t$ and any prime divisor $E$ on $W$, we have
$$a\left(E, X, (B+tN)+(M+tP)\right) = a(E, X, B+M) - t\mult_E N_\sigma\left(p^*(K_X+B+M)\right)$$
and
$$a\left(E, X', (B'+tN')+(M'+tP')\right) = a(E, X', B'+M') - t\mult_E N_\sigma\left(q^*(K_{X'}+B'+M')\right).$$
Since we have $a(E, X, B+M)\leq a(E, X', B'+M')$ by the negativity lemma and since Equation~\eqref{eq:89} implies $N_\sigma(p^*(K_X+B+M))\geq N_\sigma(q^*(K_{X'}+B'+M'))$, we conclude that
$$a\left(E, X, (B+tN)+(M+tP)\right)\leq a\left(E, X', (B'+tN')+(M'+tP')\right),$$
as desired.
\end{proof}

In the following result we examine the behaviour of log canonical thresholds under dlt blowups.

\begin{lem}\label{lem:balanceddltblowup}
Let $(X,B+M)$ be a projective $\Q$-factorial NQC log canonical g-pair with a dlt blowup
$$\pi\colon (X',B'+M') \longrightarrow (X,B+M),$$
and set $P:=P_\sigma(K_X+B+M)$, $N:=N_\sigma(K_X+B+M)$, $P':=P_\sigma(K_{X'}+B'+M')$, and $N':=N_\sigma(K_{X'}+B'+M')$. If\, $(X,B+M)$ has a minimal model, then, for each real number $t$, the g-pair $(X,(B+tN)+(M+tP))$ is log canonical if and only if the g-pair $(X',(B'+tN')+(M'+tP'))$ is log canonical.
\end{lem}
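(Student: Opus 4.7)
The plan is to compute discrepancies of both modified g-pairs on a common sufficiently high log resolution $W$ of $X$ and $X'$ and to show that they are equal, from which the equivalence of log canonicity follows at once. The central observation is that, because $\pi$ is a dlt blowup, one has $K_{X'}+B'+M' \sim_\R \pi^*(K_X+B+M)$, so pulling back to any common smooth resolution $(p,q)\colon W \to X\times X'$ of the indeterminacies of $\pi^{-1}$ yields $p^*(K_X+B+M) \sim_\R q^*(K_{X'}+B'+M')$. Since the Nakayama--Zariski decomposition depends only on the $\R$-linear equivalence class, this already gives $N_\sigma\big(p^*(K_X+B+M)\big) = N_\sigma\big(q^*(K_{X'}+B'+M')\big)$ as divisors on $W$, and $P_\sigma\big(p^*(K_X+B+M)\big) \sim_\R P_\sigma\big(q^*(K_{X'}+B'+M')\big)$. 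Note that, compared with Lemma \ref{lem:ascending}, the absence of an effective exceptional error term (as would be produced by the Negativity lemma for a general MMP step) is precisely what will give the full equivalence here rather than a one-sided implication.

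Next, I would replace $W$ by a sufficiently high model so that $W$ serves as a log resolution of both $(X,B+M)$ and $(X',B'+M')$ and of the supports of $N$ and $N'$. By Lemma \ref{lem:resNQC}(e) applied to the dlt blowup $\pi$, the g-pair $(X',B'+M')$ has a minimal model, so Lemma \ref{lem:resNQC}(a) together with \cite[Corollary III.5.17]{Nak04} allows one to arrange that, on $W$, the positive parts $P_\sigma\big(p^*(K_X+B+M)\big)$ and $P_\sigma\big(q^*(K_{X'}+B'+M')\big)$ are NQC, and that both $M$ and $M'$ are pushforwards of NQC divisors on $W$. Remark \ref{rem:easycomputation2}, applied separately to $p\colon W\to X$ and to $q\colon W\to X'$, then expresses $a\big(E, X, (B+tN)+(M+tP)\big)$ as $a(E, X, B+M) - t\cdot\mult_E N_\sigma\big(p^*(K_X+B+M)\big)$, and analogously for $a\big(E, X', (B'+tN')+(M'+tP')\big)$, for every prime divisor $E$ on $W$ and every real $t$.

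Because $K_{X'}+B'+M' \sim_\R \pi^*(K_X+B+M)$, the untwisted discrepancies $a(E, X, B+M)$ and $a(E, X', B'+M')$ coincide, and the equality of the two $N_\sigma$ terms was already secured in the first step. Hence the discrepancies of the two modified g-pairs at every prime divisor on $W$ agree, and since $W$ is taken as a sufficiently high log resolution this criterion detects log canonicity of both g-pairs, yielding the desired equivalence. The main obstacle, I expect, is not any single step but the book-keeping required to arrange one and the same $W$ on which simultaneously (i) both $P_\sigma$ pullbacks are NQC, (ii) both $M$ and $M'$ extend as pushforwards of NQC divisors, and (iii) all relevant supports are resolved — this is the only place where the minimal-model hypothesis enters, via Lemma \ref{lem:resNQC}(e) and (a), and care is needed to track negative values of $t$ so that the formula of Remark \ref{rem:easycomputation2} is used only as a definition of discrepancy rather than relying on a nef-part interpretation that may fail.
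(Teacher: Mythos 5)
Your proposal is correct and follows essentially the same route as the paper: a single high resolution dominating both $X$ and $X'$, the crepancy relation $K_{X'}+B'+M'\sim_\R\pi^*(K_X+B+M)$ forcing the two pulled-back $N_\sigma$'s to coincide (with no exceptional error term, unlike Lemma \ref{lem:ascending}), and Remark \ref{rem:easycomputation2} applied twice to identify the discrepancies of the two modified g-pairs. The only cosmetic difference is that the paper simply takes a desingularisation $Y\to X'$ composed with $\pi$ (rather than a resolution of a graph) and does not need to invoke Lemma \ref{lem:resNQC}(e) explicitly, since the two pullbacks are $\R$-linearly equivalent.
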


\begin{proof}
Let $f\colon Y\to X'$ be a desingularisation, and set $g:=\pi\circ f$. By Lemma~\ref{lem:resNQC}\eqref{lem:resNQC-a} and by \cite[Corollary~III.5.17]{Nak04}, we may assume that $P_\sigma(g^*(K_X+B+M))=P_\sigma(f^*(K_{X'}+B'+M'))$ is NQC. By applying Remark~\ref{rem:easycomputation2} twice and using $K_{X'}+B'+M'\sim_\R\pi^*(K_X+B+M)$, we see that, for each real number $t$ and any prime divisor $E$ on $Y$,  we have
\begin{align*}
a\left(E, X, (B+tN)+(M+tP)\right)& = a(E, X, B+M) - t\mult_E N_\sigma\left(g^*(K_X+B+M)\right)\\
& = a(E, X', B'+M') - t\mult_E N_\sigma\left(f^*(K_{X'}+B'+M')\right)\\
& = a\left(E, X', (B'+tN')+(M'+tP')\right),
\end{align*}
which proves the lemma.
\end{proof}

The following is an immediate corollary. 

\begin{cor}\label{cor:balancedlifting}
     Let $ (X_1,B_1+M_1) $ be a projective NQC log canonical g-pair such that $K_{X_1}+B_1+M_1$ is pseudoeffective and such that $ (X_1,B_1+M_1) $ has a minimal model. Consider a lifted MMP diagram 
   	\begin{center}
		\begin{tikzcd}[column sep = 0.8em, row sep = large]
			(X_1',B_1'+M_1') \arrow[d, "f_1" swap] \arrow[rr, dashed, "\rho_1"] && (X_2',B_2'+M_2') \arrow[d, "f_2" swap] \arrow[rr, dashed, "\rho_2"] && (X_3',B_3'+M_3') \arrow[d, "f_3" swap] \arrow[rr, dashed, "\rho_3"] && \cdots 
			\\ 
			(X_1,B_1+M_1) \arrow[rr, dashed, "\pi_1"] && (X_2,B_2+M_2) \arrow[rr, dashed, "\pi_2"] && (X_3,B_3+M_3) \arrow[rr, dashed, "\pi_3"] && \cdots\rlap{,}
		\end{tikzcd}
	\end{center}
	where the sequence at the bottom of the diagram is a sequence of flips. Then the MMP at the bottom of the diagram is balanced if and only if the MMP at the top of the diagram is balanced.
\end{cor}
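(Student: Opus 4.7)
The plan is to deduce the corollary almost immediately from Lemma \ref{lem:balanceddltblowup}, applied index by index. The definition of a balanced MMP is a condition that for every $i$ the log canonical threshold of $P_i+N_i$ with respect to $(X_i,B_i+M_i)$ is zero, and analogously on $X_i'$. Since the two g-pairs sitting above and below in column $i$ of the diagram are related by a dlt blowup $f_i\colon X_i'\to X_i$ (by the conclusion of Lemma \ref{lem:lifting_g}), Lemma \ref{lem:balanceddltblowup} will give, at each fixed index $i$, that $\big(X_i,(B_i+tN_i)+(M_i+tP_i)\big)$ is log canonical if and only if $\big(X_i',(B_i'+tN_i')+(M_i'+tP_i')\big)$ is log canonical, and hence that the two log canonical thresholds coincide. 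In particular one is zero exactly when the other is, which will give the equivalence of balancedness.

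The one hypothesis I need to verify in order to invoke Lemma \ref{lem:balanceddltblowup} at each step is that $(X_i,B_i+M_i)$ (equivalently $(X_i',B_i'+M_i')$) has a minimal model for every $i$. I will start from the assumption that $(X_1,B_1+M_1)$ has a minimal model. Since $f_1$ is a dlt blowup, $K_{X_1'}+B_1'+M_1'\sim_\R f_1^*(K_{X_1}+B_1+M_1)$, so Lemma \ref{lem:resNQC}(e), with $E=0$, yields a minimal model of $(X_1',B_1'+M_1')$. By Lemma \ref{lem:lifting_g}, the top row of the diagram is a $(K_{X_1'}+B_1'+M_1')$-MMP for the $\Q$-factorial NQC dlt g-pair $(X_1',B_1'+M_1')$, consisting of flips and divisorial contractions. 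Thus Lemma \ref{lem:resNQC}(d) propagates the existence of a minimal model along the top row, giving that each $(X_i',B_i'+M_i')$ has a minimal model. Applying Lemma \ref{lem:resNQC}(e) once more, in the opposite direction, through each dlt blowup $f_i$, I conclude that every $(X_i,B_i+M_i)$ has a minimal model too.

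With the hypothesis of Lemma \ref{lem:balanceddltblowup} verified at each column, the final step is routine: fix $i$, apply Lemma \ref{lem:balanceddltblowup} to the dlt blowup $f_i\colon (X_i',B_i'+M_i')\to(X_i,B_i+M_i)$ to get that the log canonical threshold of $P_i+N_i$ on $X_i$ equals that of $P_i'+N_i'$ on $X_i'$, and quantify over $i$. The main substantive content has already been packed into Lemmas \ref{lem:balanceddltblowup} and \ref{lem:resNQC}, so there is no real obstacle here beyond keeping track of which hypotheses go where; the only point requiring mild care is ensuring that the minimal model hypothesis travels along every column of the diagram, as sketched above.
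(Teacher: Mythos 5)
Your proof is correct and takes essentially the same route as the paper, which states this corollary as an immediate consequence of Lemma \ref{lem:balanceddltblowup} applied columnwise to the dlt blowups $f_i$. The additional bookkeeping you supply -- propagating the existence of minimal models along the top row via Lemma \ref{lem:resNQC}(d) and across each column via Lemma \ref{lem:resNQC}(e) -- is precisely the implicit verification the paper's ``immediate'' deduction relies on.
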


Now we can prove Proposition~\ref{pro:balanced}.

\begin{proof}[Proof of Proposition~\ref{pro:balanced}]
  By Lemmas~\ref{lem:lifting_g} and~\ref{lem:resNQC}\eqref{lem:resNQC-e}, we may assume that all $X_i$ are $\Q$-factorial. For each $i$, let $t_i$ be the log canonical threshold of $P_\sigma(K_{X_i}+B_i+M_i)+N_\sigma(K_{X_i}+B_i+M_i)$ with respect to $(X_i,B_i+M_i)$. Then by Lemma~\ref{lem:ascending} and by \cite[Theorem 1.5]{BZ16}, there exists a positive integer $i_0$ such that $t_i=t_{i_0}$ for all $i\geq i_0$. For each $i\geq i_0$ set $B_i^\circ:=B_i+t_{i_0}N_i$ and $M_i^\circ:=M_i+t_{i_0}P_i$. Then each g-pair $(X_i,B_i^\circ+M_i^\circ)$ is log canonical, the map
\begin{center}
\begin{tikzcd}[column sep = 0.8em, row sep = 1.75em]
  \pi_i\colon\left(X_i,B_i^\circ+M_i^\circ\right)\arrow[rr, dashed] &&\left(X_{i+1},B_{i+1}^\circ+M_{i+1}^\circ\right)
\end{tikzcd}
\end{center}
is a divisorial contraction or a flip, and the sequence of maps $\{\pi_i\}_{i\geq i_0}$ is a balanced MMP for the g-pair $(X_{i_0},B_{i_0}^\circ+M_{i_0}^\circ)$, where this g-pair has a minimal model by Lemma~\ref{lem:resNQC}\eqref{lem:resNQC-d}. Consider a lifted MMP diagram 
   	\begin{center}
		\begin{tikzcd}[column sep = 0.8em, row sep = large]
			\left(X_{i_0}',B_{i_0}'+M_{i_0}'\right) \arrow[d, "f_{i_0}" swap] \arrow[rr, dashed, "\rho_{i_0}"] && \left(X_{i_0+1}',B_{i_0+1}'+M_{i_0+1}'\right) \arrow[d, "f_{i_0+1}" swap] \arrow[rrr, dashed, "\rho_{i_0+1}"] &&& \cdots 
			\\ 
			\left(X_{i_0},B_{i_0}^\circ+M_{i_0}^\circ\right) \arrow[rr, dashed, "\pi_{i_0}"] && \left(X_{i_0+1},B_{i_0+1}^\circ+M_{i_0+1}^\circ\right) \arrow[rrr, dashed, "\pi_{i_0+1}"] &&& \cdots\rlap{.}
		\end{tikzcd}
	\end{center}
Then by Corollary~\ref{cor:balancedlifting} the MMP at the top of this diagram is a balanced MMP of a dlt g-pair $(X_{i_0}',B_{i_0}'+M_{i_0}')$, which has a minimal model by Lemma~\ref{lem:resNQC}\eqref{lem:resNQC-e}. The MMP at the top of this diagram terminates if and only if the original $(K_{X_1}+B_1+M_1)$-MMP terminates, which concludes the proof.
\end{proof}

\section{Special termination}\label{sec:specterm}

As promised in the introduction, in this section we prove a version of special termination in the context of pseudoeffective generalised pairs. 

\begin{thm}\label{thm:specterm_g-pairs1}
Assume the termination of flips for pseudoeffective NQC $\Q$-factorial dlt g-pairs of dimension at most $n-1$.
	
Let $ (X_1,B_1+M_1) $ be a $\Q$-factorial NQC dlt g-pair of dimension $ n $ such that $K_{X_1}+B_1+M_1$ is pseudoeffective. Consider a sequence of flips 
	\begin{center}
		\begin{tikzcd}[column sep = 0.8em, row sep = 1.75em]
			(X_1,B_1+M_1) \arrow[dr, "\theta_1" swap] \arrow[rr, dashed, "\pi_1"] && (X_2,B_2+M_2) \arrow[dl, "\theta_1^+"] \arrow[dr, "\theta_2" swap] \arrow[rr, dashed, "\pi_2"] && (X_3,B_3+M_3) \arrow[dl, "\theta_2^+"] \arrow[rr, dashed, "\pi_3"] && \cdots.  \\
			& Z_1 && Z_2
		\end{tikzcd}
	\end{center}
Assume that Conjecture~\ref{con:mainconjecture} holds for this sequence of flips. Then there exists a positive integer $N$ such that 
	\[ \Exc(\theta_i)\cap\nklt(X_i, B_i + M_i )=\emptyset \quad\text{for all } i\geq N . \]
\end{thm}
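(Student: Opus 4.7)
The plan is to prove the conclusion by induction on the dimension $k$ of log canonical centres of $(X_i, B_i + M_i)$: let $P(k)$ assert the existence of an index $N_k$ such that for every $i \geq N_k$ no log canonical centre of dimension at most $k$ meets $\Exc(\theta_i)$. Since log canonical centres have dimension at most $n-1$, the theorem is $P(n-1)$, and the base $P(-1)$ is vacuous. For the inductive step $P(k-1)\Rightarrow P(k)$, Lemma \ref{lem:shiftingtheconjecture} lets us relabel so that $P(k-1)$ holds from $i=1$ and Conjecture \ref{con:mainconjecture} still applies, and Lemma \ref{lem:valuations} then lets us assume (after another relabelling) that the adjunction data are stable along the sequence for each of the finitely many log canonical centres of dimension exactly $k$. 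It suffices to fix one such centre $S$ and verify $S_i \cap \Exc(\theta_i) = \emptyset$ for $i \gg 0$.

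If $S \not\subseteq \sB_-(K_{X_1}+B_1+M_1)$, then Lemma \ref{lem:restriction2} makes $(K_{X_1}+B_1+M_1)|_S$ pseudoeffective, and this property propagates to the restrictions on each $S_i$ via Lemma \ref{lem:pullbackpseff} and the Negativity lemma. By $P(k-1)$ the lower-dimensional log canonical centres inside $S_i$ do not touch the flipping loci, and the restricted maps $\pi_i|_{S_i}$ organise into ample small quasi-flips for $(S, B_S + M_S)$ in the sense preceding Lemma \ref{lem:lifting_g}: the relative anti-ampleness of $-(K_{X_i}+B_i+M_i)$ for $\theta_i$ and the ampleness of $K_{X_{i+1}}+B_{i+1}+M_{i+1}$ for $\theta_i^+$ descend along adjunction to the centre $S$. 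Lifting this restricted sequence to a dlt blowup of $(S, B_S + M_S)$ via Lemma \ref{lem:lifting_g} yields a genuine MMP on an NQC $\Q$-factorial dlt g-pair of dimension $k \leq n-1$. The termination hypothesis then forces this lifted MMP to stop, so $\pi_i|_{S_i}$ is an isomorphism for all large $i$, and combined with $P(k-1)$ this gives $S_i \cap \Exc(\theta_i) = \emptyset$ eventually.

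When $S \subseteq \sB_-(K_{X_1}+B_1+M_1)$, Conjecture \ref{con:mainconjecture} yields an index $j$ at which $\pi_j$ is not an isomorphism at the generic point of $S_j$, which (since flips are isomorphisms off $\Exc(\theta_j)$) forces $S_j \subseteq \Exc(\theta_j)$; by the smallness of $\theta_j$, the image $\theta_j(S_j)$ has dimension strictly less than $k$. Iterating and relabelling via Lemma \ref{lem:shiftingtheconjecture} so that Conjecture \ref{con:mainconjecture} continues to apply, after finitely many such events the centre of the defining valuation drops in dimension and is handled by $P(k-1)$. The main obstacle lies here: the flipped locus $\Exc(\theta_j^+)$ can be of larger dimension than the flipping locus, so the strict transform $S_{j+1} = c_{X_{j+1}}(E)$ could in principle rebound to dimension $k$. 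Preventing this rebound is where the Nakayama--Zariski machinery is indispensable, via the equivalent formulation Conjecture \ref{con:mainconjecture2} from Proposition \ref{pro:equivalence}, which replaces the condition ``$S \subseteq \sB_-$'' by the more tractable condition that the relevant valuation appears as a component of $N_\sigma$ on a dlt blowup; by Lemma \ref{lem:resNQC}(b)--(c) such a component behaves predictably under the MMP and is eventually contracted.
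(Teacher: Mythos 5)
Your overall frame (induction on the dimension of log canonical centres, restriction via adjunction, lifting the restricted quasi-flips with Lemma \ref{lem:lifting_g}, and invoking termination in dimension $\le n-1$) matches the paper, but there is a genuine gap in the case $S \subseteq \sB_-(K_{X_1}+B_1+M_1)$, which you treat as a real case requiring a dimension-drop/iteration argument and where you explicitly leave the ``rebound'' problem open. The paper's key point is that this case never arises: since a log canonical g-pair has only finitely many log canonical centres, and the number strictly drops whenever a centre is contained in a flipping locus, after an unconditional relabelling one may assume that \emph{every} $\pi_i$ is an isomorphism at the generic point of \emph{every} log canonical centre. Conjecture \ref{con:mainconjecture} (applied to the relabelled sequence via Lemma \ref{lem:shiftingtheconjecture}) is then used in contrapositive form: if a surviving centre $S$ lay in $\sB_-$, some $\pi_i$ would fail to be an isomorphism at its generic point, a contradiction; hence $S \nsubseteq \sB_-$ and Lemma \ref{lem:restriction2} gives pseudoeffectivity of the adjoint divisor. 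Your appeal to Conjecture \ref{con:mainconjecture2} and Lemma \ref{lem:resNQC}(b)--(c) does not substitute for this: those statements concern divisorial components of $N_\sigma$ on a dlt blowup and their contraction in the \emph{lifted} MMP, and they say nothing about controlling the dimension of $c_{X_{j+1}}(E)$ after a flip, which is exactly the obstacle you name but do not overcome.

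Two further mismatches. First, your inductive statement $P(k)$ asserts disjointness of all centres of dimension $\le k$ from $\Exc(\theta_i)$, but your inductive step only yields that the restricted maps $\pi_i|_{S_i}$ are eventually isomorphisms; the paper inducts on the weaker statement (the restriction of $\theta_i$ to each centre of dimension $\le d$ is an isomorphism) and only at the very end deduces $\Exc(\theta_i)\cap\nklt(X_i,B_i+M_i)=\emptyset$, via the intersection-theoretic argument with flipping and flipped curves against $\lfloor B_i\rfloor$ (a flipping curve meeting but not contained in $\lfloor B_i\rfloor$ forces a flipped curve inside $\lfloor B_{i+1}\rfloor$, contradicting the claim). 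That final step is missing from your proposal, and as written your induction does not close because the step proves less than the hypothesis it consumes. Second, before applying Lemma \ref{lem:valuations} one needs the restricted maps to be isomorphisms in codimension one, which the paper obtains from the difficulty-function argument of \cite[Proposition 2.17(iv)]{LMT23}; this intermediate step is absent from your outline.
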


\begin{proof}
	We divide the proof into several steps.
	In Steps~\ref{th51-step1} and~\ref{th51-step2} below, we follow closely the proof of \cite[Theorem 5.1]{LMT23}.
	
	We will relabel the indices several times in the proof: this is justified by Lemma~\ref{lem:shiftingtheconjecture}.
	
\begin{enumerate}[wide, label={{\it Step}~\rm\arabic*.}, ref=\arabic*]	\medskip
\item\label{th51-step1}	
From now until the end of the proof, we will be proving the following claim by induction on $d$.
	
\begin{claim} For each non-negative integer $d$, there exists a positive integer $N_d$ such that for each $i\geq N_d$ the restriction of\, $\theta_i$ to each log canonical centre of dimension at most $d$ of the g-pair $(X_i,B_i+M_i)$ is an isomorphism.
\end{claim}

	We first show that the claim implies the theorem. Indeed, the claim implies that $\lfloor B_i\rfloor$ does not \emph{contain} any flipping or flipped curves for all $i\geq N_{n-1}$. Therefore, if $\Exc(\theta_i)\cap\lfloor B_i\rfloor\neq\emptyset$ for some $i\geq N_{n-1}$, then there exists a flipping curve $C\subseteq\Exc(\theta_i)$ such that $C\cdot\lfloor B_i\rfloor > 0$. But then $C^+\cdot\lfloor B_{i+1}\rfloor < 0$ for every flipped curve $C^+\subseteq\Exc(\theta_i^+)$; hence $C^+\subseteq\lfloor B_{i+1}\rfloor$, and we have a contradiction.
	
	\medskip
	
	Now we start proving the claim. First recall  that the number of log canonical centres of any log canonical g-pair is finite. At step $i$ of the MMP as above, if a log canonical centre of $(X_i,B_i+M_i)$ belongs to $\Exc(\theta_i)$, then the number of log canonical centres of $(X_{i+1},B_{i+1}+M_{i+1})$ is smaller than the number of log canonical centres of $(X_i,B_i+M_i)$ by \cite[Lemma 2.8(iii)]{LMT23}. 
	
	Thus, there exists a positive integer $N_0$ such that the set $\Exc(\theta_i)$ does not contain any log canonical centre of $(X_i,B_i+M_i)$ for $i\geq N_0$. By relabelling, we may assume that $N_0=1$. In particular, this proves the claim for $d=0$.
		
	Therefore, we may assume that for each $i\geq1$ the map $\pi_i$ is an isomorphism at the generic point of each log canonical centre of $(X_i,B_i+M_i)$.

\item\label{th51-step2}	 Let $d$ be a positive integer. By induction and by relabelling, we may assume that each map $ \pi_i $ is an isomorphism along every log canonical centre of dimension at most $ d -1 $. 
	
	Now consider a log canonical centre $ S_1 $ of $ (X_1,B_1+M_1) $ of dimension $ d $. By Step~\ref{th51-step1}, we have birational maps $\pi_i|_{S_i}\colon S_i\dashrightarrow S_{i+1}$, where $ S_i $ is the strict transform of $ S_1 $ on $ X_i $. Every log canonical centre of $ (S_i,B_{S_i} + M_{S_i}) $ is a log canonical centre of $(X_i,B_i + M_i)$, and hence by induction each map $\pi_i|_{S_i}$ is an isomorphism along $\Supp \lfloor B_{S_i}\rfloor$. Then by \cite[Proposition 2.17(iv)]{LMT23} and since the difficulty function considered in that result takes values in $\N$, after relabelling the indices, we may assume that
 	\begin{equation}\label{eq:20}
 	\pi_i|_{S_i}\colon S_i\dashrightarrow S_{i+1} \text{ is an isomorphism in codimension $1$ for every $ i $}. 
	\end{equation}
	Moreover, by Lemma~\ref{lem:valuations} and by relabelling the indices,  we may assume that
	\begin{equation}\label{eq:0}
	\left(\pi_i|_{S_i}\right)_*\left(K_{S_i}+B_{S_i}+M_{S_i}\right)=K_{S_{i+1}}+B_{S_{i+1}}+M_{S_{i+1}}\quad\text{for every } i. 	
	\end{equation}

\item\label{th51-step3} Since for each $i$ the map $ \pi_i $ is an isomorphism at the generic point of $S_i$ by Step~\ref{th51-step1}, by Conjecture~\ref{con:mainconjecture} we have that $S_1\nsubseteq\sB_-(K_{X_1}+B_1+M_1)$. Moreover, $S_1$ is normal as it is a log canonical centre of the dlt g-pair $(X_1,B_1+M_1)$. Therefore, Lemma~\ref{lem:restriction2} implies that
\begin{equation}\label{eq:claim2}
\text{the divisor }K_{S_1} + B_{S_1} + M_{S_1}\text{ is pseudoeffective.}
\end{equation}	
For every $ i $ denote by $ R_i $ the normalization of $ \theta_i(S_i) $. By \eqref{eq:0} each diagram
		\begin{center}
		\begin{tikzcd}[column sep = 0.8em, row sep = large]
			\left(S_i,B_{S_i}+M_{S_i}\right) \arrow[dr, "\theta_i|_{S_i}" swap] \arrow[rr, dashed, "\pi_i|_{S_i}"] && \left(S_{i+1},B_{S_{i+1}}+M_{S_{i+1}}\right) \arrow[dl, "\theta_i^+|_{S_{i+1}}"]  \\
			& R_i &
		\end{tikzcd}
	\end{center}
	is a small ample quasi-flip. Thus, by Lemma~\ref{lem:lifting_g} there exists a diagram
		\begin{center}
		\begin{tikzcd}[column sep = 0.8em, row sep = large]
			\left(S_1', B_{S_1'} + M_{S_1'}\right) \arrow[d, "g_1" swap] \arrow[rr, dashed, "\varphi_1"] && \left(S_2', B_{S_2'} + M_{S_2'} \right) \arrow[d, "g_2" swap] \arrow[rrrr, dashed, "\varphi_2"] &&&& \cdots
			\\ 
			\left(S_1,B_{S_1}+M_{S_1}\right) \arrow[dr, "\theta_1|_{S_1}" swap] \arrow[rr, dashed, "\pi_1|_{S_1}"] && \left(S_2,B_{S_2}+M_{S_2}\right) \arrow[dl, "\theta_1^+|_{S_2}"]  \arrow[rrrr, dashed, "\pi_2|_{S_2}"] &&&& \cdots\rlap{,} \\
			& R_1 && 
		\end{tikzcd}
	\end{center}
	where each map $g_i$ is a dlt blowup of $(S_i,B_{S_i}+M_{S_i})$ and the sequence of rational maps $\varphi_i$ yields an MMP for the NQC $ \Q $-factorial dlt g-pair $ (S_1', B_{S_1'} + M_{S_1'}) $. Then $K_{S_1'}+ B_{S_1'} + M_{S_1'}$ is pseudoeffective by \eqref{eq:claim2}.
    Since we assume termination of flips for pseudoeffective NQC $\Q$-factorial dlt g-pairs in dimensions at most $n-1$, we infer that the $(K_{S_1'}+ B_{S_1'} + M_{S_1'})$-MMP terminates. Thus, by relabelling, we may assume that 
    \begin{equation}\label{eq:9}
    \left(S_i', B_{S_i'} + M_{S_i'}\right) \simeq \left(S_{i+1}', B_{S_{i+1}'} + M_{S_{i+1}'}\right) \quad \text{for all } i\geq1 .
    \end{equation}
    The construction yields that $-(K_{S_i'}+ B_{S_i'} + M_{S_i'})$ and $K_{S_{i+1}'}+B_{S_{i+1}'}+M_{S_{i+1}'}$ are nef over $R_i$; hence $K_{S_i'}+ B_{S_i'} + M_{S_i'}$ is numerically trivial over $R_i$ for each $i$ by \eqref{eq:9}. In particular, $K_{S_i}+B_{S_i}+M_{S_i}$ and $K_{S_{i+1}}+B_{S_{i+1}}+M_{S_{i+1}}$ are numerically trivial over $R_i$ for each $i$, and thus $\theta_i|_{S_i}$ and $\theta_i^+|_{S_{i+1}}$ contract no curves. Therefore, $\theta_i|_{S_i}$ and $\theta_i^+|_{S_{i+1}}$ are isomorphisms, and consequently all maps $\pi_i|_{S_i}$ are isomorphisms. This finishes the proof of the claim, and thus of the theorem.
    \hfill\qedhere
    \end{enumerate}
\end{proof}

\section{Proof of the main result}\label{sec:mainresult}

In this section we prove the main result of the paper.

	\begin{proof}[Proof of Theorem~\ref{mainthm}]
		Arguing by contradiction, we assume that the given sequence of flips does not terminate. For each $i\geq1$ we denote by $P_i$, respectively $N_i$, the strict transforms of $P_1:=P_\sigma(K_{X_1}+B_1+M_1)$, respectively $N_1:=N_\sigma(K_{X_1}+B_1+M_1)$, on $X_i$. 
		
		By Lemmas~\ref{lem:resNQC}\eqref{lem:resNQC-d} and~\ref{lem:shiftingtheconjecture}, we may relabel the indices in this sequence; thus by Theorem~\ref{thm:specterm_g-pairs1} we may assume that
		\begin{equation} \label{eq:8}
			\Exc(\theta_i) \cap \nklt(X_i, B_i + M_i ) = \emptyset \quad \text{for all } i .
		\end{equation}
By Lemma~\ref{lem:resNQC}\eqref{lem:resNQC-a} there exists a log resolution $f\colon \widetilde X\to X_1$ of $(X_1,B_1+M_1)$ such that the $\R$-divisor $P_\sigma(f^*(K_{X_1}+B_1+M_1))$ is NQC, and there exists an NQC divisor $\widetilde M$ on $\widetilde X$ such that $M_1 = f_*\widetilde M$. Since the sequence of flips balanced, the log canonical threshold of $ P_1 + N_1 $ with respect to $ (X_1,B_1+M_1) $ is zero; hence by Remark~\ref{rem:easycomputation2} there exists a prime divisor
\begin{equation}\label{eq:10}
E\subseteq\Supp N_\sigma\left(f^*\left(K_{X_1}+B_1+M_1\right)\right)
\end{equation}
such that $a(E, X_1, B_1+M_1)={-}1$. 
	
	Let $\widetilde B$ be the sum of $f_*^{-1}B_1$ and of all $f$-exceptional prime divisors on $\widetilde X$. Then there exists an effective $f$-exceptional $\R$-divisor $G$ on $\widetilde X$ such that
	$$ K_{\widetilde X}+\widetilde B+\widetilde M\sim_\R f^*(K_{X_1}+B_1+M_1)+G,$$
	so, by \cite[Lemma 2.16]{GL13} or by \cite[Lemma 2.4]{LP20a}, we have
	$$N_\sigma\left(K_{\widetilde X}+\widetilde B+\widetilde M\right)= N_\sigma\left(f^*\left(K_{X_1}+B_1+M_1\right)\right)+G.$$
	This equation, together with \eqref{eq:10}, implies that
	\begin{equation}\label{eq:11}
	E\subseteq\Supp N_\sigma\left(K_{\widetilde X}+\widetilde B+\widetilde M\right).
	\end{equation}
	Note that, since $a(E, X_1, B_1+M_1)={-}1$, we have
	\begin{equation}\label{eq:12}
	E\subseteq\Supp\left\lfloor\widetilde B\right\rfloor
	\end{equation}
	and
	\begin{equation}\label{eq:14}
	E\nsubseteq\Supp G.
	\end{equation}
	
	As in the proof of Lemma~\ref{lem:dltblowup}, we can run a $(K_{\widetilde X}+\widetilde B+\widetilde M)$-MMP over $X_1$ which terminates with a model $(X_1', B_1' + M_1')$ and which contracts precisely the divisor $G$. Thus, we obtain a dlt blowup
		$$ h_1 \colon \left(X_1', B_1' + M_1'\right) \longrightarrow \left(X_1, B_1+M_1\right) $$
		of $ (X_1, B_1+M_1) $. The prime divisor $E$ is not contracted by this MMP by \eqref{eq:14}. Let $E_1$ be the strict transform of $E$ on $X_1'$. Then
		\begin{equation}\label{eq:13}
		  E_1\subseteq\Supp N_\sigma
                  \left(K_{X_1'}+B_1'+M_1'\right)\subseteq\sB_-\left(K_{X_1'}+B_1'+M_1'\right)
		\end{equation}
		by \eqref{eq:11} and by Lemmas~\ref{lem:resNQC}\eqref{lem:resNQC-b} and~\ref{lem:nakayamazariskidiminished}, and
		\begin{equation}\label{eq:15}
		E_1\subseteq\Supp\lfloor B_1'\rfloor
		\end{equation}
		by \eqref{eq:12}.
		
	By Lemma~\ref{lem:lifting_g} there exists a diagram
		\begin{center}
		\begin{tikzcd}[column sep = 0.8em, row sep = large]
			\left(X_1', B_1' + M_1'\right) \arrow[d, "h_1" swap] \arrow[rr, dashed, "\rho_1"] && \left(X_2', B_2' + M_2' \right) \arrow[d, "h_2" swap] \arrow[rr, dashed, "\rho_2"] && \cdots
			\\ 
			(X_1, B_1+M_1) \arrow[rr, dashed, "\pi_1"] && (X_2, B_2+M_2) \arrow[rr, dashed, "\pi_2"] && \cdots\rlap{,}
		\end{tikzcd}
	\end{center}
	where for each $ i\geq 1$ the map $\rho_i\colon X_i'\dashrightarrow X_{i+1}'$ is a $ (K_{X_i}' + B_i' + M_i') $-MMP and the map $h_i$ is a dlt blowup of the g-pair $(X_i,B_i + M_i)$. In particular, the sequence at the top of the above diagram is an MMP for the NQC $\Q$-factorial dlt g-pair $(X_1',B_1'+M_1')$, and by Lemma~\ref{lem:liftingtheconjecture} we have that Conjecture~\ref{con:mainconjecture} holds for that sequence. Therefore, \eqref{eq:13} and \eqref{eq:15} imply that $E_1$ is contracted at some step of that sequence. However, by \eqref{eq:8} and by \cite[Lemma 2.20(i)]{CT23}, that MMP is a sequence of flips, and we have a contradiction.
	\end{proof}
 

\end{document}